\tikzstyle{noeud}=[circle,inner sep=2, minimum size =3 pt, line width = 1pt, draw=black, fill=white]
\tikzstyle{bignoeud}=[circle,inner sep=2, minimum size =18 pt, line width = 1pt, draw=black, fill=white]
\newtheorem{theorem}{Theorem}
\newtheorem{lemma}[theorem]{Lemma}
\theoremstyle{definition}
\newtheorem{remark}[theorem]{Remark}
\newtheorem{claim}{Claim}
\newtheorem{problem}{Open Problem}
\newcommand{\tmax}{t_{\max}}
\DeclareMathOperator{\dist}{dist}
\DeclareMathOperator{\diam}{diam}
\DeclareMathOperator{\rea}{\mathbf{R}}
\newcommand{\temporalMD}{\textsc{Temporal Resolving Set}\xspace}
\newcommand{\TDM}{\textsc{3-Dimensional Matching}\xspace}
\newcommand{\decisionpb}[4]{
	\begin{center}
		\noindent\framebox{\begin{minipage}{#4\textwidth}
				#1\\
				Instance: #2\\ 
				Question: #3
		\end{minipage}}
	\end{center}
}
\newcommand{\longversion}[1]{}
\title{Resolving Sets in Temporal Graphs\thanks{This work was supported by the International Research Center "Innovation Transportation and Production Systems" of the I-SITE CAP 20-25 and by the ANR project GRALMECO (ANR-21-CE48-0004). The first author was also funded by the European Union (ERC, POCOCOP, 101071674). Views and opinions expressed are however those of the author(s) only and do not necessarily reflect those of the European Union or the European Research Council Executive Agency. Neither the European Union nor the granting authority can be held responsible for them. The research of the third author was supported by Business Finland Project 6GNTF, funding decision 10769/31/2022 and by the Academy of Finland, grant 338797.}~\thanks{A shorter conference version of this manuscript appeared in the proceedings of IWOCA 2024~\cite{bok2024resolving2}.}}
\author[1,2]{Jan Bok}
\author[1,3]{Antoine Dailly}
\author[1,4,5,6]{Tuomo Lehtilä}
\affil[1]{Université Clermont Auvergne, CNRS, Mines de Saint-Étienne, Clermont-Auvergne-INP, LIMOS, 63000, Clermont-Ferrand, France}
\affil[2]{Department of Algebra, Faculty of Mathematics and Physics, Charles University, Prague, Czechia}
\affil[3]{Université Clermont Auvergne, INRAE, UR TSCF, 63000, Clermont-Ferrand, France}
\affil[4]{Department of Mathematics and Statistics, University of Turku, Turku, Finland}
\affil[5]{Department of Computer Science, University of Helsinki, Helsinki, Finland}
\affil[6]{Helsinki Institute for Information Technology (HIIT), Espoo, Finland}
\date{}
\begin{document}
	
	\maketitle

	\begin{abstract}
		A \emph{resolving set} $R$ in a graph $G$ is a set of vertices such that every vertex of $G$ is uniquely identified by its distances to the vertices of $R$. Introduced in the 1970s, this concept has been since then extensively studied from both combinatorial and algorithmic points of view. We propose a generalization of the concept of resolving sets to temporal graphs, \emph{i.e.}, graphs with  edge sets that change over discrete time-steps. In this setting, the \emph{temporal distance from $u$ to $v$} is the earliest possible time-step at which a journey with strictly increasing time-steps on edges leaving $u$ reaches $v$, \emph{i.e.}, the first time-step at which $v$ could receive a message broadcast from $u$. A \emph{temporal resolving set} of a temporal graph $\mathcal{G}$ is a subset $R$ of its vertices such that every vertex of $\mathcal{G}$ is uniquely identified by its temporal distances from vertices of $R$.
		We study the problem of finding a minimum-size temporal resolving set, and show that it is NP-complete even on very restricted graph classes and with strong constraints on the time-steps: temporal complete graphs where every edge appears in either time-step~1 or~2, temporal trees where every edge appears in at most two consecutive time-steps, and even temporal subdivided stars where every edge appears in at most two (not necessarily consecutive) time-steps. On the other hand, we give polynomial-time algorithms for temporal paths and temporal stars where every edge appears in exactly one time-step, and give a combinatorial analysis and algorithms for several temporal graph classes where the edges appear in periodic time-steps.
		
		\medskip
		\noindent {\bf Temporal graphs, Resolving sets, Metric dimension, Trees, Graph algorithms, Complexity}  
	\end{abstract}
	
	\section{Introduction}
	
	For a set $R$ of vertices of a graph $G$, every vertex of $G$ can compute a vector of its distances from the vertices of $R$ (the distance, or number of edges in a shortest path, from $u$ to $v$ will be denoted by $\dist(u,v)$). If all such computed vectors are unique, then we call $R$ a \emph{resolving set} of $G$. This notion was introduced in the 1970s and gave birth to the notion of \emph{metric dimension of $G$}, that is, the smallest size of a resolving set of $G$. Metric dimension is a well-studied topic, with both combinatorial and algorithmic results, see for example surveys~\cite{kuziak2021metric,tillquist2023getting}.
	
	A \emph{temporal graph} can be defined as a graph on a given vertex set, and with an edge set that changes over discrete time-steps. Their study gained traction as a natural representation of dynamic, evolving networks~\cite{casteigts2012time,holme2015modern,holme2019temporal,michail2016introduction}. However, in the temporal setting, the notion of distance differs from the static setting: two vertices can be topologically close, but the \emph{journey} (\emph{i.e.}, a path in the underlying graph with strictly increasing time-steps\footnote{Those are sometimes called \emph{strict} journeys in the literature, but as argued in~\cite{kunz2023which}, strict journeys are naturally suited to applications where one cannot traverse multiple edges at the same time.}) between them can be long, or even impossible.
	The smallest time-step at which a journey from $u$ reaches $v$ is called the \emph{temporal distance from $u$ to $v$} and is denoted by $\dist_t(u,v)$. Note that in a temporal graph, there might be vertex pairs $(u,v)$ such that there is no temporal journey from $u$ to $v$, in which case we set the temporal distance as infinite. Furthermore, the temporal distance of non-adjacent vertices is not necessarily symmetric.

	The notion of temporal distance allows us to define a \emph{temporal resolving set} as a set $R$ of vertices of a temporal graph such that every vertex has a unique vector of temporal distances from the vertices of $R$. We are interested in the problem of finding a minimum-size temporal resolving set.
	
	Our motivation is both introducing a temporal variant of the well-studied problem of resolving sets and studying its combinatorial and algorithmic properties, as well as the problem of location in dynamic networks. Indeed, resolving sets are an analogue of geolocation in discrete structures~\cite{tillquist2023getting}, and thus temporal resolving sets are similar: if we consider that transmitters placed on vertices of the temporal resolving set emit continuously, we can locate ourselves by waiting long enough to receive the signals and constructing the temporal distance vector.
	
	In the remainder of this section, we  give an overview of the static (\emph{i.e.}, non-temporal) version and some variants of resolving set as well as an overview of temporal graphs, before giving a formal definition of the \temporalMD problem and an outline of our results.
	
	\medskip
	\noindent\textbf{Separating vertices.} Standard resolving sets were introduced independently by Harary and Melter~\cite{harary1976metric} and by Slater~\cite{slater1975leaves}, and have been well-studied due to their various applications (robot navigation~\cite{khuller1996landmarks}, detection in sensor networks~\cite{slater1975leaves}, and more~\cite{tillquist2023getting}). Their non-local nature makes them difficult to study from an algorithmic point of view: finding a minimum-size resolving set is NP-hard even on very restricted graph classes (planar graphs of bounded degree~\cite{diaz2017complexity}, bipartite graphs~\cite{epstein2015weighted}, and interval graphs of diameter~2~\cite{foucaud2017identification}, to name a few) and W[2]- and W[1]-hard when parameterized by solution size~\cite{hartung2013parameterized} and feedback vertex set~\cite{galby2023metric}, respectively. On the positive side, the problem is polynomial-time solvable on trees~\cite{chartrand2000resolvability,harary1976metric,khuller1996landmarks,slater1975leaves}, outerplanar graphs~\cite{diaz2017complexity} and cographs~\cite{epstein2015weighted}, to name a few. 
 
    Note that there are two conditions for a set to be resolving: \emph{reaching} (every vertex must be reached from some vertex of the resolving set) and \emph{separating} (no two vertices may have the same distance vector). A \emph{weak resolving set} is a set that is separating, but not necessarily reaching, all the vertices of a graph (alternatively, some authors, \emph{e.g.}~\cite{gutkovich2023computing}, call a reaching and separating set a \emph{dominating resolving set}). Minimum-size standard and weak resolving sets can thus differ in size by at most~1 as only one vertex can be unreachable.
	
	A natural variant of resolving sets consists in limiting the distance at which transmitters can emit. A first constraint is that of a robot which can only perceive its direct neighborhood: an \emph{adjacency resolving set}~\cite{jannesari2012metric} is a resolving set using the following distance: $\dist_a(u,v)=\min(\dist(u,v),2)$. More generally, a \emph{$k$-truncated resolving set}~\cite{estrada2021k} is a resolving set using the following distance: $\dist_k(u,v)=\min(\dist(u,v),k+1)$ (so an adjacency resolving set is a 1-truncated resolving set). Note that both variants were mostly studied in their weak version. The $k$-truncated resolving sets have quickly attracted attention on the combinatorial side~\cite{bartha2023sharp,frongillo2022truncated,geneson2021distance,geneson2022broadcast}. On the algorithmic side, the corresponding decision problem is known to be NP-hard~\cite{estrada2021k}, even on trees (although, in this case, it becomes polynomial-time solvable when $k$ is fixed) ~\cite{gutkovich2023computing}. Another notable variant requires the resolving set to also be a dominating set~\cite{brigham2003resolving}, and has been applied on temporal graph setup for machine learning applications~\cite{muklisin2023analysis}.

	\medskip
	\noindent\textbf{Temporal graphs.}
	A temporal graph $\mathcal{G}=(V,E_1,\ldots,E_{\tmax})$ is described by a sequence of edge sets representing the graph at discrete \emph{time-steps}, which are positive integers in $\{1,\ldots,\tmax\}$~\cite{casteigts2012time} (note that the sequence might be infinite, in which case the number of time-steps is not bounded by $\tmax$, and we can adapt all definitions in consequence).
	An alternate, equivalent description of $\mathcal{G}$ is $\mathcal{G}=(V,E,\lambda)$ (or $(G,\lambda)$ where $G=(V,E)$) with $E = \bigcup_{i=1}^{\tmax}E_i$ is called the \emph{underlying graph} and $\lambda: E \rightarrow 2^{\{1,\ldots,\tmax\}}$ is an edge-labeling function called \emph{time labeling} such that $\lambda(e)$ is the set of time-steps at which the edge $e$ exists, \emph{i.e.}, $i \in \lambda(e)$ if and only if $e \in E_i$~\cite{kempe2000connectivity}. We call a time labeling a \emph{$k$-labeling} if $|\lambda(e)| \leq k$ for every edge $e$. Furthermore, we say that a temporal graph is a \emph{temporal tree} (resp. \emph{temporal star}, etc.) if its underlying graph, understood as the graph induced by the union of all its edge sets, is a tree (resp. star, etc.). For a subgraph $G'$ of graph $G$ and a time labeling $\lambda$ of $G$, we denote by $\lambda_{|G'}$ the restriction of $\lambda$ to subgraph $G'$.
	
	A specific case of temporal graphs are those with a repeating sequence of edge sets, which have been studied in contexts such as routing~\cite{flocchini2012searching,flocchini2013exploration,ilcinkas2011power,liu2008scalable}, graph exploration~\cite{bellitto2023restless}, cops and robbers games~\cite{erlebach2020game,de2023cops} and others~\cite{arrighi2023multi,zschoche2020complexity} due to their natural applications in, \emph{e.g.}, transportation networks. Formally, a \emph{$p$-periodic $k$-labeling}
	is a time labeling such that both $E_{i+p}=E_i$ for every $i \geq 1$ and $|\lambda(e) \cap \{1,\ldots,p\}| \leq k$ for every edge $e$. A temporal graph with a periodic time labeling has an infinite sequence of edge sets, but can be represented with its $p$  first time-steps, understanding that the sequences will repeat after this.
	
	A vertex $v$ is said to be \textit{reachable from} another vertex $u$ if there exists a journey from $u$ to $v$. For a given vertex $v$, the set of vertices which can be reached from vertex $v$ is denoted by $\rea(v)$. For a vertex set $S$ such that $v\in S$, we denote by $\rea^S(v)\subseteq \rea(v)$ the set of vertices which can be reached from $v$ but not from any other vertex in $S$.
	
	\medskip
	\noindent\textbf{Temporal resolving sets.} We extend the definition of resolving sets to the temporal setting: a resolving set in a temporal graph is a reaching and separating set. More formally, a set $R$ of vertices of a temporal graph $\mathcal{G}=(V,E,\lambda)$ is a \emph{temporal resolving set} if $(i)$ for every vertex $v \in V$, there is a vertex $s \in R$ such that $v \in \rea(s)$; $(ii)$ for every two different vertices $u,v \in V$, there is a vertex $s \in R$ such that $\dist_t(s,u) \neq \dist_t(s,v)$. 
	Note that every vertex in a temporal resolving set is trivially separated from every other vertex.
	The problem we are studying is the following:
	
	\decisionpb{\temporalMD}{A temporal graph $\mathcal{G}=(V,E,\lambda)$; and an integer $k$.}{Is there a \emph{temporal resolving set} of size at most $k$?}{0.9}
	
	Due to the fact that temporal distance is not a metric in the usual sense (symmetry and the standard definition of the triangle inequality might not hold), we call the minimum size of a temporal resolving set of a graph $G$ the \emph{temporal resolving number of $G$} instead of temporal metric dimension. 
	
	Note that we can assume in the following that there is an edge $e$ such that $1 \in \lambda(e)$ (otherwise, let $m$ be the smallest time-step and decrease every time-step by $m-1$).
	Temporal resolving set can also be seen as a generalization of standard and $k$-truncated resolving sets: if $\lambda(e)=\{1,\ldots,\diam(G)\}$ for every edge $e$ (where $\diam(G)=\max\{\dist(u,v)~|~u,v \in V\}$, then a temporal resolving set is a standard resolving set; and if $\lambda(e)=\{1,\ldots,k\}$ for every edge $e$, then a temporal resolving set is a $k$-truncated resolving set\footnote{Also called a \emph{$k$-truncated dominating resolving set} in~\cite{gutkovich2023computing}.}.
	
	\medskip
	\noindent\textbf{Our results and outline.} We focus on time labelings with few labels per edge, mostly limiting ourselves to one or two labels per edge. Although the setting is more restricted than the general case, we prove that these scenarios already yield NP-complete problems or non-trivial polynomial algorithms.
	
	We present three sets of results. First, we focus in \Cref{sec-NPHARD} on the computational complexity of finding a minimum-size temporal resolving set in temporal graphs with 2-labelings. In particular, we prove that the problem is NP-complete on temporal complete graphs, which contrasts heavily with other resolving set problems. The problem is also NP-complete on temporal subdivided stars, and on temporal trees even when the two time-steps are consecutive.
	
	In \Cref{sec-POLY}, we give polynomial-time algorithms for some classes with 1-labelings. However, even for temporal paths, while the algorithm is quite natural, proving optimality is non-trivial. We also give algorithms for temporal stars, and for temporal subdivided stars when $\tmax=2$.
		
	Finally, in \Cref{SecPartLabel}, we take a more combinatorial approach to periodic time labelings. We find the optimal bounds for the temporal resolving number of several graph classes under this setting, namely in temporal paths, cycles, complete graphs, complete binary trees, and subdivided stars. We also prove that \temporalMD is FPT on trees with respect to number of leaves and XP on subdivided stars with $p$-periodic 1-labelings with respect to the period $p$.
	
	\section{NP-hardness of \temporalMD}
	\label{sec-NPHARD}
	
	In this section, we give hardness results for \temporalMD on very restricted graph classes, and with strong constraints on the time labeling.
	
	In the static setting, complete graphs tend to be easy to work with: since all the vertices are twins, they are indistinguishable from one another, and hence we need to take all of them but one in order to separate them. Indeed, the metric dimension and location-domination number of $K_n$ are both $n-1$.
	However, this is not the case with temporal complete graphs, since now the vertices are not necessarily twins anymore. We even prove that it is NP-hard:
	
	\begin{theorem}
		\label{thm-completeGraphsLabels12}
		\temporalMD is NP-complete on temporal complete graphs with a 1-labeling, even when there are only two time-steps.
	\end{theorem}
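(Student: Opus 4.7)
The plan is to prove membership in NP and then reduce from \TDM{} to establish hardness. Membership is routine: given a candidate $R$ of size at most $k$, we compute all temporal distances $\dist_t(s,v)$ for $s \in R$ and $v \in V$ in polynomial time by a time-respecting BFS, and then verify that the resulting distance vectors are finite and pairwise distinct.

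The structural heart of the argument is the following reduction of the problem to a static one: in a temporal complete graph with a 1-labeling restricted to time-steps $\{1,2\}$, the temporal distance between two distinct vertices equals the label of the edge joining them. A direct edge with label 1 gives distance 1 immediately; if the direct edge has label 2 then arrival time 2 is already attained, and any journey of length at least 2 arrives no earlier since its labels must strictly increase. Let $H$ denote the spanning subgraph consisting of label-1 edges. Then the distance vector of $v \notin R$ is entirely determined by $N_H(v) \cap R$, while any $v \in R$ is automatically separated from every other vertex by the entry $0$ at position $v$. Hence $R$ is a temporal resolving set iff the map $v \mapsto N_H(v) \cap R$ is injective on $V \setminus R$, i.e., iff $R$ is a (weak) \emph{locating set} of $H$.

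Since $H$ can be chosen as any graph (label its edges $1$ and the remaining $K_n$-edges $2$), it suffices to prove that minimum weak locating set is NP-hard on arbitrary graphs. I would do this by a reduction from \TDM{}: given disjoint $X, Y, Z$ of size $q$ and triples $T \subseteq X \times Y \times Z$, construct $H$ with an \emph{element-gadget} for each element of $X \cup Y \cup Z$ and a \emph{selector-vertex} $s_t$ for each triple $t \in T$. Each element-gadget should contain a pair of twin-like vertices whose only way to be separated is to include in $R$ some selector $s_t$ associated with a triple containing that element. With suitable padding to anchor the target size, a locating set of size $q + c$ (for an explicit constant offset $c$) will exist iff the \TDM{} instance admits a perfect matching, yielding the desired reduction.

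The main obstacle lies in the gadget design under the rigid constraints of the complete-graph setting: because \emph{every} vertex pair is joined by some edge with label in $\{1,2\}$, inter-gadget edges are not neutral and must be labeled carefully to prevent both accidental separation of intra-gadget twins and accidental merging of signatures across gadgets. The natural tactic is to place all inter-gadget edges at label 2, so that separation occurs strictly through $H$-edges (label 1) within or between designated gadget pairs; the technical work then consists in verifying the two-way correspondence between matchings and locating sets and in pinning down the exact target value of $k$ so that the optimum encodes exactly the solvability of the 3-dimensional matching instance.
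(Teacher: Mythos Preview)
Your structural observation is exactly the paper's: in a temporal complete graph with a 1-labeling using only time-steps $\{1,2\}$, the temporal distance from $s$ to any $v \neq s$ is precisely the label of the edge $sv$, so a temporal resolving set is nothing but an \emph{adjacency resolving set} (your ``locating set'') of the subgraph $H$ of label-$1$ edges. The paper stops right there and simply cites the known NP-hardness of minimum adjacency resolving set on planar graphs, making the whole proof a handful of lines.

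Your route diverges only after this point: instead of citing, you propose to re-derive the NP-hardness of the locating-set problem from scratch via a \TDM{} reduction. This is unnecessary given the existing literature, and more importantly your sketch is not a proof --- the gadgets are described only in aspirational terms (``twin-like vertices'', ``suitable padding'', an unspecified constant offset $c$), and you yourself flag the gadget design as ``the main obstacle'' without resolving it. So as written there is a genuine gap: the reduction to adjacency resolving set is correct and complete, but its NP-hardness is not actually established in your text. The fix is simply to invoke the known hardness of adjacency resolving set rather than build a bespoke \TDM{} reduction.
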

	
	\begin{proof}
		We reduce from the problem of finding a minimum-size adjacency resolving set, which is NP-hard on planar graphs~\cite{fernau2018adjacency}.
		
		Let $G=(V,E)$ be a connected planar graph, and $k$ be an integer. We construct the following temporal complete graph $\mathcal{G}=(V,E',\lambda)$:
		\begin{itemize}
			\item For every $e \in E$, $\lambda(e)=\{1\}$;
			\item For every pair of vertices $u,v$ such that $uv \not\in E$, $\lambda(uv)=\{2\}$.
		\end{itemize}
		An adjacency resolving set in $G$ clearly is a temporal resolving set in $\mathcal{G}$ and vice versa: the edges $e$ such that $\lambda(e)=\{2\}$ in $\mathcal{G}$ are exactly the paths of length at least~2 in $G$.
	\end{proof}
	
	The next two results are both proved by reducing from \TDM, one of the seminal NP-complete problems~\cite{karp2010reducibility}, and are inspired by the NP-completeness proof for $k$-truncated resolving sets on trees in~\cite{gutkovich2023computing}, with nontrivial adaptations to constrain the setting as much as possible.
	
	\decisionpb{\TDM (\textsc{3DM})}{A set $S\subseteq X\times Y\times Z$, where $X$, $Y$, and $Z$ are disjoint subsets of $\{1,\ldots,n\}$ of size $p$;
		and an integer $\ell < |S|$.}{Does $S$ contain a {\em matching} of size at least $\ell$, \emph{i.e.}, a subset $M\subseteq S$ such that $|M| \geq \ell$ and no two elements of $M$ agree in any coordinate?}{0.9}
	
	\begin{theorem}
		\label{thm-NPhardnessOnSubStars}
		\temporalMD is NP-complete on temporal stars, in which every edge is subdivided twice, with a $2$-labeling.
	\end{theorem}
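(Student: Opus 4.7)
The plan is to reduce from \TDM, following the general strategy of the tree-based hardness reduction of~\cite{gutkovich2023computing} but adapting the gadget to fit the very restricted shape of a subdivided star with a $2$-labeling. Membership in NP is routine: temporal distances from a candidate set can be computed in polynomial time, so both the reaching and separating conditions are easy to verify.

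For the hardness construction, given a \textsc{3DM} instance $(X,Y,Z,S,\ell)$ with $|X|=|Y|=|Z|=p$, I would build a subdivided star $\mathcal{G}$ whose center $c$ is connected via length-$3$ paths $c$--$a_i$--$b_i$--$u_i$ to each leaf $u_i$. The branches would split into two families: a \emph{triple-branch} $B_j$ for each $t_j = (x_j, y_j, z_j) \in S$, whose three edges receive time labels encoding the three coordinates of $t_j$ (exploiting the two allowed labels per edge to pack coordinate information into a path of length only three), together with \emph{element-branches} (or similar auxiliary branches) designed to produce pairs of vertices that can only be separated once certain triple-branches are hit by the resolving set. The budget $k$ would be chosen so that an optimal temporal resolving set must contain one leaf per triple-branch of a matching of size $\ell$, plus a constant number of forced inner vertices.

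The correctness argument will proceed in two directions. Forward, given a matching $M \subseteq S$ of size at least $\ell$, I would take the leaves $u_j$ with $t_j \in M$ together with the forced vertices and verify, by case analysis on the edge labels, that every pair of vertices is reached and separated. Backward, I would show that any temporal resolving set of size at most $k$ induces a set of triples forming a matching, by arguing that two triple-branches whose triples share a coordinate leave a specific canonical vertex pair unresolved, no matter which leaves are picked from them.

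The hard part will be the gadget design itself. I need a $2$-labeling on paths of length~$3$ that simultaneously ensures that (a) a single leaf $u_j$ of $B_j$ separates exactly the vertex pairs associated with the three coordinates of $t_j$, (b) two triple-branches whose triples share a coordinate jointly fail to separate some canonical auxiliary pair, and (c) the reaching condition does not create shortcuts that would enable a smaller resolving set. Packing enough encoding into only two labels per edge on a three-edge path, while keeping $\tmax$ polynomially bounded in $p$ and $|S|$, is the delicate combinatorial core of the reduction; I expect this to require handcrafting disjoint label-slots for $X$-, $Y$-, and $Z$-coordinates, with inner vertices of element-branches playing the role of the canonical unresolved pair witnessing a coordinate clash.
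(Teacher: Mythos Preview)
Your proposal is a research plan, not a proof: you explicitly defer the entire gadget construction to future work (``the hard part will be the gadget design itself\ldots I expect this to require handcrafting disjoint label-slots''). Since the labeling and the auxiliary branches are never specified, there is nothing concrete to verify in either direction of the reduction. The separation arguments you sketch (``by case analysis on the edge labels'') cannot be carried out without those labels, and the backward direction hinges entirely on a canonical unresolved pair that you never exhibit.

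There is also a structural mismatch with how the reduction actually goes through in the paper. You propose to put the \emph{leaves} of branches corresponding to triples \emph{in} the matching into the resolving set, together with unspecified element-branches. The paper does the opposite and is considerably simpler: it uses a \emph{single} auxiliary branch $u\text{--}t_1\text{--}t_2\text{--}t_3$ (labels $\{2\},\{1\},\{3\}$), and for each triple $(x_i,y_i,z_i)$ a branch $u\text{--}a_i\text{--}b_i\text{--}c_i$ with labels $\{2,a_i{+}4\},\{3,b_i{+}4\},\{4,c_i{+}4\}$ where $\{a_i,b_i,c_i\}=\{x_i,y_i,z_i\}$ sorted increasingly. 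The resolving set is $\{t_1\}\cup\{a_i: i\notin M\}$, i.e.\ the vertex adjacent to the center for each triple \emph{not} in the matching. The key observation is that $t_1$ alone reaches every branch vertex at time equal to its coordinate plus~$4$; since $X,Y,Z$ are disjoint, the only possible collisions are $a_i$ vs $a_j$ (or $b_i$ vs $b_j$, etc.) with equal coordinates, and those are exactly the pairs that force one of the two branches to contain a resolving vertex---hence at most one of the two triples can be left for the matching. No per-element auxiliary branches are needed, and the budget is $s-\ell+1$. Your inverted correspondence (leaves of matching branches) is not a priori impossible, but it would require a genuinely different labeling in which leaves reach across the center, and you have not supplied one.
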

	
	\begin{proof}
		We note that the problem is clearly in NP: a certificate is a set of vertices, and for each vertex, we can compute the time vectors and check that they are all different and that every vertex is reached by at least one vertex from the set in polynomial time. To prove completeness, we reduce from \textsc{3DM}.
		
		Starting from an instance $(S,\ell)$ of \textsc{3DM}, denoting $s=|S|$ and the $i$-th triple in $S$ by $(x_i,y_i,z_i)$ with $x_i \in X, y_i \in Y, z_i \in Z$, we construct an instance $(\mathcal{G},s+2-\ell)$ of \temporalMD, where $\mathcal{G}=(T,\lambda)$. This construction is detailed below, see \Cref{fig-NPHardnessSubStars}. 
		
		Let $V(T) = \{u,t_1,t_2,t_3\} \cup \mathlarger{\bigcup}_{i=1}^s \{a_i,b_i,c_i\}$. We will arrange the vertices in the following way as a twice subdivided star. The center vertex is $u$ and it is attached to vertices $a_i$ which are adjacent to vertices $b_i$ which are adjacent to vertices $c_i$ for each $1\leq i\leq s$. Moreover, $u$ is also adjacent to $t_1$ which is adjacent to $t_2$ and which is adjacent to $t_3$. Vertices $\{a_i,b_i,c_i\}$ correspond to elements $\{x_i,y_i,z_i\}$ so that $a_i<b_i<c_i$.
		Edges are labeled as follows: 
		\begin{itemize}
			\item For every $i \in \{1,\ldots,s\}$, $\lambda(ua_i) = \{2,a_i+4\}$;
			\item For every $i \in \{1,\ldots,s\}$, $\lambda(a_ib_i) = \{3,b_i+4\}$;   \item For every $i \in \{1,\ldots,s\}$, $\lambda(b_ic_i) = \{4,c_i+4\}$;
			\item We have $\lambda(ut_1) = \{2\}$, $\lambda(t_1t_2) = \{1\}$ and $\lambda(t_2t_3) = \{3\}$;
		\end{itemize}
		
		Observe that the constructed graph is a star whose every edge is subdivided exactly twice. Moreover, every edge has at most two labels.

		We shall now prove that we decide YES for \textsc{3DM} on $(S,\ell)$ if and only if we decide YES for \temporalMD on $((T,\lambda),s+1-\ell)$.
		
		$(\Rightarrow)$ Assume that $S$ contains a matching $M$ of size at least $\ell$. We construct the following set:
		$R = \bigcup_{i \not\in M} \{ a_i \} \cup \{t_1\}.$
		\longversion{$$R = \mathlarger{\bigcup}_{i \not\in M} \{ a_i \} \cup \{t_1\}.$$}
		Note that $R$ contains $t_1$ and each $a_i$ such that the corresponding element of $S$ is not in $M$. Furthermore, every vertex of $T$ is reached from $t_1$, so we need only consider the separation part.
		
		First observe that $t_1$ separates $u$ and each $t_i$. Furthermore, it reaches each vertex of type $a_i$ at moment $a_i+4$, vertices $b_i$ at moment $b_i+4$ and vertices $c_i$ at moment $c_i+4$. Consider then some $a_h\in R$. It reaches vertex $b_h$ at moment $3$ and $c_h$ at moment $4$. Hence, together with vertex $t_1$, vertex $a_h$ separates vertices $a_h,b_h$ and $c_h$ from all other vertices. Furthermore, vertex $a_h$ reaches other vertices of types $a_i,b_i$ and $c_i$ at the same moment as vertex $t_1$. Hence, we have uniquely separated vertices $u,t_1,t_2,t_3$ and every $a_h,b_h,c_h$ such that $a_h\in R$. Recall that sets $X,Y$ and $Z$ are disjoint. Thus, each vertex $a_i$ is separated from vertices of type $b_j$ for any $i$ and $j$ and the same is true for $a_i$ and $c_j$ as well as $b_i$ and $c_j$. Let us next consider when we might not separate $a_i$ from $a_j$ (the same argument holds for pairs $b_i,b_j$ and $c_i,c_j$). We may assume that $\{a_i,a_j\}\cap R=\emptyset$. Thus, corresponding elements belong to $M$. Therefore $a_i\neq a_j$ and hence, they are reached at different time moments from vertex $t_1$, a contradiction. Therefore, $R$ is a temporal resolving set of the claimed cardinality.
		
		$(\Leftarrow)$ Assume that there is a temporal resolving set $R$ of size at most $s-\ell+1$. Since every vertex must be reached from a vertex of $R$, we must have one of vertices $t_1,t_2$ or $t_3$ in $R$. Now, as in the previous case, only the $a$'s $b$'s and $c$'s must be reached and resolved. If each $a_i$, $b_i$ and $c_i$ is unique, then $t_1$ is a resolving set of size $1\leq s-\ell+1$. Moreover, if for example $a_i=a_j$ (similar argument holds for $b_i=b_j$ and $c_i=c_j$), then at most one of corresponding tuples can belong to the matching. Moreover, to separate $a_i$ and $a_j$, we need a vertex in resolving set to belong to one of the branches. Hence, we may choose as our matching $M$ the sets corresponding to branches which contain no members of the temporal resolving set. There are at least $\ell$ such branches and the claim follows.
	\end{proof}
	
	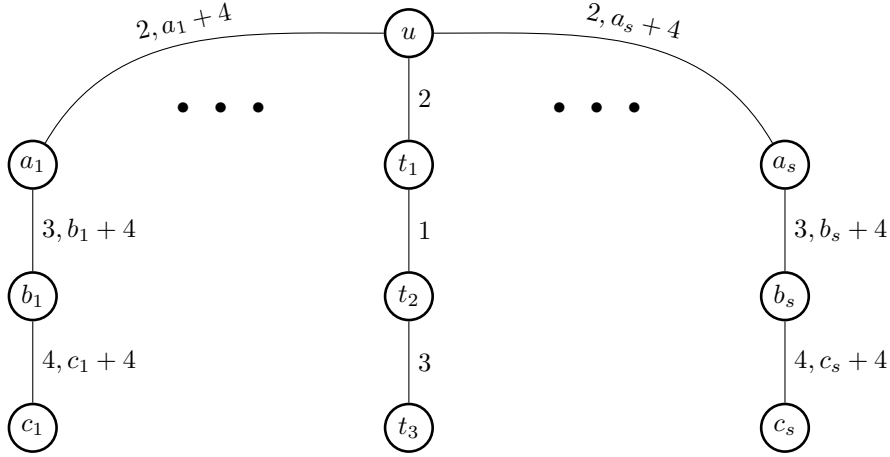
\begin{figure}[tb]
		\centering
			\begin{tikzpicture}
				
				\node[bignoeud] (u) at (7,12) {$u$};
				
				\node[bignoeud] (v11) at (2,10.25) {$a_1$};
				\node[bignoeud] (v12) at (2,8.5) {$b_1$};
				\node[bignoeud] (v13) at (2,6.75) {$c_1$};
				\node[bignoeud] (vs1) at (12,10.25) {$a_s$};
				\node[bignoeud] (vs2) at (12,8.5) {$b_s$};
				\node[bignoeud] (vs3) at (12,6.75) {$c_s$};
				
				\node[bignoeud] (t1) at (7,10.25) {$t_1$};
				\node[bignoeud] (t2) at (7,8.5) {$t_2$};
				\node[bignoeud] (t3) at (7,6.75) {$t_3$};
				
				\draw[out=180,in=60] (u) to node[midway,above,sloped]{$2,a_1+4$} (v11);
				\draw[out=0,in=120] (u) to node[midway,above,sloped]{$2,a_s+4$} (vs1);
				
				\draw (u) to node[midway,right]{$2$} (t1);
				\draw (t1) to node[midway,right]{$1$} (t2);
				\draw (t2) to node[midway,right]{$3$} (t3);
				
				\draw (v11) to node[midway,right]{$3,b_1+4$} (v12);
				\draw (v12) to node[midway,right]{$4,c_1+4$} (v13);
				\draw (vs1) to node[midway,right]{$3,b_s+4$} (vs2);
				\draw (vs2) to node[midway,right]{$4,c_s+4$} (vs3);
				
				\draw (4,11) node {$\bullet$};
				\draw (4.5,11) node {$\bullet$};
				\draw (5,11) node {$\bullet$};
				\draw (9,11) node {$\bullet$};
				\draw (9.5,11) node {$\bullet$};
				\draw (10,11) node {$\bullet$};
			\end{tikzpicture}
		\caption{The construction of the proof of \Cref{thm-NPhardnessOnSubStars}. Only the branches 1 and $s$ are detailed together with the control branch. We have $\{a_i,b_i,c_i\}=\{x_i,y_i,z_i\}$ where $a_i<b_i<c_i$.}
		\label{fig-NPHardnessSubStars}
	\end{figure}
	
	\begin{theorem}
		\label{thm-NPhardnessOnTrees}
		\temporalMD is NP-complete on temporal trees, even with only one vertex of degree at least~5, with a 2-labeling where the labels are consecutive.
	\end{theorem}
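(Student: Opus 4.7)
The approach is to reduce from \TDM, mimicking the overall structure of \Cref{thm-NPhardnessOnSubStars} but with the gadget redesigned so that every edge carries only two consecutive labels. Given a 3DM instance $(S,\ell)$ with $|S|=s$, I would build a tree $(T,\lambda)$ containing one hub vertex $u$ (the unique vertex of degree at least $5$), one branch per triple $T_i=(x_i,y_i,z_i)$, and a short control branch attached to $u$ that forces a specific source vertex into every resolving set, playing the role previously played by $t_1,t_2,t_3$.

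The essential change is that every edge of the old construction carrying non-consecutive labels $\{a,b\}$ with $b>a+1$ has to be replaced by a subpath whose edges only carry consecutive labels $\{j,j+1\}$. My concrete plan is: (i) lengthen each triple branch into a long delay line attached to $u$, labeling the $j$-th edge with $\{j,j+1\}$, so that a signal emitted at the control source propagates through the branch and reaches its end-gadget at a controlled moment; (ii) place the three marker vertices $a_i,b_i,c_i$ inside that end-gadget with consecutive labels chosen so that the control source sees them at times depending only on $x_i,y_i,z_i$, so that any two markers collide iff their coordinates collide, exactly as in 3DM; (iii) add a short inner gadget in which $a_i,b_i,c_i$ sit on a tiny path with small consecutive labels starting at $a_i$, so that $a_h\in R$ separates $b_h,c_h$ from their twins in other branches, while the strictly increasing labels along the delay line prevent $a_h$ from ever reaching vertices outside its branch at times different from the control source's times. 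All non-hub vertices have degree at most $4$ by construction, preserving the degree constraint.

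The main obstacle I anticipate is handling the new intermediate vertices introduced by expanding non-consecutive edges into paths: each of them must be separated automatically by the hub and the control source, without adding members to $R$. I would argue that because each delay line has strictly increasing labels, every intermediate vertex occupies a unique time from the control source; and that any potential cross-branch confusion is broken by the fact that $a_h$'s only extra information is about its own branch's inner gadget, which by design shares no reachability pattern with any other branch. Once this technical verification is in place, the equivalence between a matching of size $\ell$ in $S$ and a temporal resolving set of size $s+1-\ell$ goes through by the same two-direction argument as in \Cref{thm-NPhardnessOnSubStars}, and NP membership is immediate since temporal distance vectors can be computed in polynomial time.
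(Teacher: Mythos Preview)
Your reduction has a genuine gap in the forward direction: you cannot separate the intermediate delay-line vertices across different branches with a resolving set of size $s+1-\ell$. If every branch carries the labels $\{j,j+1\}$ on its $j$-th edge, then the $j$-th intermediate vertex of branch $i$ and the $j$-th intermediate vertex of branch $i'$ are at the \emph{same} temporal distance from your control source. The remaining members of $R$ are the $a_h$ for $h\notin M$; but since these sit in the end-gadget at the far end of a strictly increasing delay line, they cannot journey backward and therefore reach no intermediate vertex at all, in their own branch or anywhere else. Hence for any two distinct branches $i,i'$, every pair of corresponding intermediate vertices receives identical distance vectors, so $R$ is not a temporal resolving set. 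Your claim that ``every intermediate vertex occupies a unique time from the control source'' holds only within a single branch, not across branches, and this is precisely where the difficulty lies.

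The paper's construction confronts exactly this obstacle, but not by clever labeling of the delay lines. Instead it attaches to \emph{every} intermediate vertex $v_i^j$ and $w_i^{j,j+1,k}$ a pendant path $t\text{--}s$ whose two edges carry only the maximal label $n^2+1$. Since each $s$ can be reached only from itself or from its $t$, one vertex of every such pair is forced into any resolving set, and the chosen $t$ then uniquely identifies its attached $v$ or $w$. This inflates the target size from $s+1-\ell$ to $\ell'=s(n(n-1)+1)+(s-\ell)$; the $3$DM argument then operates on the residual $s-\ell$ vertices, which are placed at the $v_i^1$'s near the hub (not at the far end), so that $v_i^1$ reaches $a_i,b_i,c_i$ one time-step earlier than any $v_j^1$ with $j\neq i$ does. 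The markers $a_i,b_i,c_i$ are moreover attached at different depths $v_i^{x_i},v_i^{y_i},v_i^{z_i}$ along the branch rather than clustered in an end-gadget. Your outline would need an analogous forcing mechanism for the intermediate vertices before the equivalence argument can go through.
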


	\begin{proof}
		First, note that the problem is clearly in NP: a certificate is a set of vertices, and  for each vertex, we can compute the time vectors, check that they are all different and check that every vertex is reached by at least one vertex from the set in polynomial time. To prove completeness, we reduce from \TDM.
		
		Starting from an instance $(S,\ell)$ of \TDM, denoting $s=|S|$ and the $i$-th triple in $S$ by $(x_i,y_i,z_i)$ with $x_i \in X, y_i \in Y, z_i \in Z$, we will construct an instance $(\mathcal{G},\ell')$ of \temporalMD. This construction is detailed below. Note that the time-steps cover the interval $[n-1,n^2+1]$ in order to simplify the notations but, as discussed in the introduction, they can be brought down to the interval $[1,n^2-n+2]$ instead. Let:
		
		\begin{align*}
			V = &\{u\} \cup \mathlarger{\bigcup}_{i=1}^s \left\{ \{a_i,b_i,c_i\} \cup \left( \mathlarger{\bigcup}_{j=1}^n\{v_i^j,s_i^j,t_i^j\} \right)
			\cup 
   \left( \mathlarger{\bigcup}_{j=1}^{n-1}\mathlarger{\bigcup}_{k=1}^{n-1} \{w_i^{j,j+1,k},s_i^{j,j+1,k},t_i^{j,j+1,k}\} \right) \right\}.
		\end{align*}
		We will arrange the vertices in the following way, which will be formalized below: $u$ will be connected to every $v_i^1$ which will be the start of the $i$-th branch corresponding to the $i$-th element of $S$, every $t$ will be connected to its corresponding $s$ and either $v$ or $w$, all the $w_i^{j,j+1,k}$ will form a path linking $v_i^j$ and $v_i^{j+1}$, and the vertices $a_i,b_i,c_i$ will represent the tuple elements $x_i,y_i,z_i$. The edges and their labels are as follows (edges not described do not exist):
		\begin{itemize}
			\item For every $i \in \{1,\ldots,s\}$, $\lambda(uv_i^1) = \{n-1,n\}$;
			\item For every $i \in \{1,\ldots,s\}$, $\lambda(v_i^{x_i}a_i) = \{x_in,x_in+1\}$, $\lambda(v_i^{y_i}b_i) = \{y_in,y_in+1\}$, $\lambda(v_i^{z_i}c_i) = \{z_in,z_in+1\}$;
			\item For every $i \in \{1,\ldots,s\}$ and $j \in \{1,\ldots,n-1\}$, $\lambda(v_i^jw_i^{j,j+1,1})=\{jn,jn+1\}$ and $\lambda(w_i^{j,j+1,n}v_{j+1})=\{(j+1)n-1,(j+1)n\}$;
			\item For every $i \in \{1,\ldots,s\}$, $j \in \{1,\ldots,n-1\}$ and $k \in \{1,\ldots,n-2\}$, set $$\lambda(w_i^{j,j+1,k}w_i^{j,j+1,k+1})=\{jn+k,jn+k+1\};$$
			\item For every $i \in \{1,\ldots,s\}$ and $j \in \{1,\ldots,n\}$, set $$\lambda(s_i^jt_i^j)=\lambda(t_i^jv_i^j)=\{n^2+1\};$$
			\item For every $i \in \{1,\ldots,s\}$, $j \in \{1,\ldots,n-1\}$ and $k \in \{1,\ldots,n-1\}$, set $$\lambda(s_i^{j,j+1,k}t_i^{j,j+1,k})=\lambda(t_i^{j,j+1,k}w_i^{j,j+1,k})=\{n^2+1\}.$$
		\end{itemize}
		Note that the underlying graph $T$ thus constructed is a tree, and that every edge has, as time labels, an interval of size at most~2. Furthermore, $u$ is the only vertex with degree at least~5.
		This construction is depicted on \Cref{fig-NPHardnessTrees}.
		
		Let $\ell'=s(n(n-1)+1)+(s-\ell)$. We prove that we decide YES for \TDM on $(S,\ell)$ if and only if we decide YES for \temporalMD on $((T,\lambda),\ell')$.
		
		$(\Rightarrow)$ Assume that $S$ contains a matching $M$ of size at least $\ell$. We construct the following set: $$R = \mathlarger{\bigcup}_{i \not\in M} \{ v_i^1 \} \cup \mathlarger{\bigcup}_{i=1}^s \left\{ \mathlarger{\bigcup}_{j=1}^n \{ t_i^j \} \cup \mathlarger{\bigcup}_{j=1}^{n-1}\mathlarger{\bigcup}_{k=1}^{n} \{ t_i^{j,j+1,k} \} \right\},$$ \emph{i.e.}, $R$ contains every $t$ and all the first vertices of every branch of $T$ such that the corresponding element of $S$ is not in $M$. Note that every vertex of $T$ is reached from an element of $R$, so we need to consider the separation part. First, note that $u$ and each $s$, $t$, $v$, and $w$ is uniquely separated by $R$ ($u$ by the $v_i^1$'s we selected, the other ones by the $t$'s). Hence, the only possible vertices not separated by $R$ are $a$'s, $b$'s and $c$'s. Note that, by construction, $a_i$ (resp. $b_i$, $c_i$) will be reached at time $x_in+1$ (resp. $y_in+1$, $z_in+1$) from any vertex $v_j^1$ such that $j \neq i$, and at time $x_in$ (resp. $y_in$, $z_in$) from $v_i^1$. Hence, all the $a_i$'s, $b_i$'s and $c_i$'s in branches $i$ such that $i \not\in M$ are separated by $R$. Furthermore, for every branch $i$, the vertices $a_i$, $b_i$ and $c_i$ are separated from each other. Assume now that two vertices are not separated by $R$, they have to be $a_i$ and $a_j$ (without loss of generality) such that $i \neq j$ and $i,j \in M$. However, this is only possible if $x_i=x_j$, in which case the elements $i$ and $j$ from $M$ cannot be in the same matching, which is a contradiction. Hence, $R$ is a temporal resolving set of size $s(n(n-1)+1)+(s-\ell)$: it contains $(s-\ell)$ vertices $v_i^1$, and there are $n(n-1)+1$ vertices $t$ for each of the $s$ branches.
		
		$(\Leftarrow)$ Assume that there is a temporal resolving set $R$ of size at most $\ell'=s(n(n-1)+1)+(s-\ell)$. Since every vertex must be reached from a vertex of $R$, for every pair of adjacent $s$ and $t$, at least one of them must be in $R$. Without loss of generality, assume that every $t$ is in $R$ (since this allows to reach and separate every $v$ and $w$): this means that the number of non-$t$ vertices in $R$ is at most $s-\ell$ (since every branch contains $n(n-1)+1$ pairs of adjacent $s$ and $t$). Now, as in the previous case, only the $a$'s $b$'s and $c$'s must be reached and separated, as well as $u$. Selecting either $u$ or any $v_i^1$ will take care of $u$ and allow to reach the $a$'s, $b$'s and $c$'s. Again, the possible conflicts among those vertices are the ones such that (without loss of generality) $x_i=x_j$ for $i \neq j$. In this case, the only way to separate the pair would have been to either select $a_i$ or $a_j$, or to select any vertex above them in either (or both) of the branches $i$ and $j$. Since $R$ is a temporal resolving set, all such pairs have been separated. We construct $M$ the following way: add to $M$ every $i$ such that the only vertices of the branch $i$ in $R$ are its $s$'s and $t$'s. No two elements of $M$ can verify $x_i=x_j$ (resp. $y_i=y_j$, $z_i=z_j$), since that would imply that the corresponding pair $(a_i,a_j)$ (resp. $(b_i,b_j)$, $(c_i,c_j)$) would not be separated: no vertex of branch $i$ would have been in $R$, and thus $R$ would not be a temporal resolving set. Hence, $M$ is a matching of size at least $\ell$: at most $(s-\ell)$ branches contain a vertex of $R$ that is not a $t$, and thus at least $\ell$ branches do not.
	\end{proof}
	
	\begin{figure}[h]
		\centering
		\begin{tikzpicture}[scale=0.95]
			\node[bignoeud] (u) at (6,12) {$u$};
			
			\node[bignoeud] (a1) at (4.5,10) {$a_1$};
			\node[bignoeud] (b1) at (4.5,0) {$b_1$};
			\node[bignoeud] (c1) at (4.5,4) {$c_1$};
			
			\node[bignoeud] (v11) at (2,10) {$v_1^1$};
			\node[bignoeud] (v12) at (2,7) {$v_1^2$};
			\node[bignoeud] (v13) at (2,4) {$v_1^3$};
			\node[bignoeud] (v1n) at (2,0) {$v_1^n$};
			\node[bignoeud] (vs1) at (10,10) {$v_s^1$};
			
			\node[noeud] (w121) at (2,9) {};
			\draw (w121) node[right,scale=0.75,xshift=0.5mm] {$w_1^{1,2,1}$};
			\node[noeud] (w12n) at (2,8) {};
			\draw (w12n) node[right,scale=0.75,xshift=0.5mm] {$w_1^{1,2,n-1}$};
			\node[noeud] (w231) at (2,6) {};
			\draw (w231) node[right,scale=0.75,xshift=0.5mm] {$w_1^{2,3,1}$};
			\node[noeud] (w23n) at (2,5) {};
			\draw (w23n) node[right,scale=0.75,xshift=0.5mm] {$w_1^{2,3,n-1}$};
			\node[noeud] (w341) at (2,3) {};
			\draw (w341) node[right,scale=0.75,xshift=0.5mm] {$w_1^{3,4,1}$};
			
			\node[noeud] (t11) at (0.5,10) {};
			\node[noeud] (s11) at (-1,10) {};
			\draw (s11) node[below,scale=0.75,yshift=-0.5mm] {$s_1^1$};
			\draw (t11) node[below,scale=0.75,yshift=-0.5mm] {$t_1^1$};
			\node[noeud] (t12) at (0.5,7) {};
			\node[noeud] (s12) at (-1,7) {};
			\draw (s12) node[below,scale=0.75,yshift=-0.5mm] {$s_1^2$};
			\draw (t12) node[below,scale=0.75,yshift=-0.5mm] {$t_1^2$};
			\node[noeud] (t13) at (0.5,4) {};
			\node[noeud] (s13) at (-1,4) {};
			\draw (s13) node[below,scale=0.75,yshift=-0.5mm] {$s_1^3$};
			\draw (t13) node[below,scale=0.75,yshift=-0.5mm] {$t_1^3$};
			\node[noeud] (t1n) at (0.5,0) {};
			\node[noeud] (s1n) at (-1,0) {};
			\draw (s1n) node[below,scale=0.75,yshift=-0.5mm] {$s_1^n$};
			\draw (t1n) node[below,scale=0.75,yshift=-0.5mm] {$t_1^n$};
			\node[noeud] (ts1) at (8.5,10) {};
			\node[noeud] (ss1) at (7,10) {};
			\draw (ss1) node[below,scale=0.75,yshift=-0.5mm] {$s_s^1$};
			\draw (ts1) node[below,scale=0.75,yshift=-0.5mm] {$t_s^1$};
			
			\node[noeud] (t121) at (0.5,9) {};
			\node[noeud] (s121) at (-1,9) {};
			\draw (s121) node[below,scale=0.75,yshift=-0.5mm] {$s_1^{1,2,1}$};
			\draw (t121) node[below,scale=0.75,yshift=-0.5mm] {$t_1^{1,2,1}$};
			\node[noeud] (t12n) at (0.5,8) {};
			\node[noeud] (s12n) at (-1,8) {};
			\draw (s12n) node[below,scale=0.75,yshift=-0.5mm] {$s_1^{1,2,n-1}$};
			\draw (t12n) node[below,scale=0.75,yshift=-0.5mm] {$t_1^{1,2,n-1}$};
			\node[noeud] (t231) at (0.5,6) {};
			\node[noeud] (s231) at (-1,6) {};
			\draw (s231) node[below,scale=0.75,yshift=-0.5mm] {$s_1^{2,3,1}$};
			\draw (t231) node[below,scale=0.75,yshift=-0.5mm] {$t_1^{2,3,1}$};
			\node[noeud] (t23n) at (0.5,5) {};
			\node[noeud] (s23n) at (-1,5) {};
			\draw (s23n) node[below,scale=0.75,yshift=-0.5mm] {$s_1^{2,3,n-1}$};
			\draw (t23n) node[below,scale=0.75,yshift=-0.5mm] {$t_1^{2,3,n-1}$};
			\node[noeud] (t341) at (0.5,3) {};
			\node[noeud] (s341) at (-1,3) {};
			\draw (s341) node[below,scale=0.75,yshift=-0.5mm] {$s_1^{3,4,1}$};
			\draw (t341) node[below,scale=0.75,yshift=-0.5mm] {$t_1^{3,4,1}$};
			
			\draw[out=180,in=90] (u) to node[midway,above,sloped]{$n-1,n$} (v11);
			\draw[out=0,in=90] (u) to node[midway,above,sloped]{$n-1,n$} (vs1);
			
			\draw (v11) to node[midway,above]{$n,n+1$} (a1);
			\draw (v1n) to node[midway,above]{$n^2,n^2+1$} (b1);
			\draw (v13) to node[midway,above]{$3n,3n+1$} (c1);
			
			\draw (v11) to node[midway,right]{$n,n+1$} (w121);
			\draw (w12n) to node[midway,right]{$2n-1,2n$} (v12);
			\draw (v12) to node[midway,right]{$2n,2n+1$} (w231);
			\draw (w23n) to node[midway,right]{$3n-1,3n$} (v13);
			\draw (v13) to node[midway,right]{$3n,3n+1$} (w341);
			
			\draw[dashed] (w121) to (w12n);
			\draw[dashed] (w231) to (w23n);
			\draw[dashed] (w341) to (2,2);
			\draw[dashed] (v1n) to (2,1);
			\draw[dashed] (vs1) to (10,8.5);
			
			\draw (s11) to node[midway,above]{$n^2+1$} (t11);
			\draw (s12) to node[midway,above]{$n^2+1$} (t12);
			\draw (s13) to node[midway,above]{$n^2+1$} (t13);
			\draw (s1n) to node[midway,above]{$n^2+1$} (t1n);
			\draw (s121) to node[midway,above]{$n^2+1$} (t121);
			\draw (s12n) to node[midway,above]{$n^2+1$} (t12n);
			\draw (s231) to node[midway,above]{$n^2+1$} (t231);
			\draw (s23n) to node[midway,above]{$n^2+1$} (t23n);
			\draw (s341) to node[midway,above]{$n^2+1$} (t341);
			\draw (ss1) to node[midway,above]{$n^2+1$} (ts1);
			
			\draw (v11) to node[midway,above]{$n^2+1$} (t11);
			\draw (v12) to node[midway,above]{$n^2+1$} (t12);
			\draw (v13) to node[midway,above]{$n^2+1$} (t13);
			\draw (v1n) to node[midway,above]{$n^2+1$} (t1n);
			\draw (w121) to node[midway,above]{$n^2+1$} (t121);
			\draw (w12n) to node[midway,above]{$n^2+1$} (t12n);
			\draw (w231) to node[midway,above]{$n^2+1$} (t231);
			\draw (w23n) to node[midway,above]{$n^2+1$} (t23n);
			\draw (w341) to node[midway,above]{$n^2+1$} (t341);
			\draw (vs1) to node[midway,above]{$n^2+1$} (ts1);
			
			\draw (5.5,11) node {$\bullet$};
			\draw (6,11) node {$\bullet$};
			\draw (6.5,11) node {$\bullet$};
		\end{tikzpicture}
		\caption{The construction of the proof of \Cref{thm-NPhardnessOnTrees}. Only the branch 1 is detailed, we have $x_1=1$, $y_1=n$ and $z_1=3$. Dashed lines represent longer paths.}
		\label{fig-NPHardnessTrees}
	\end{figure}
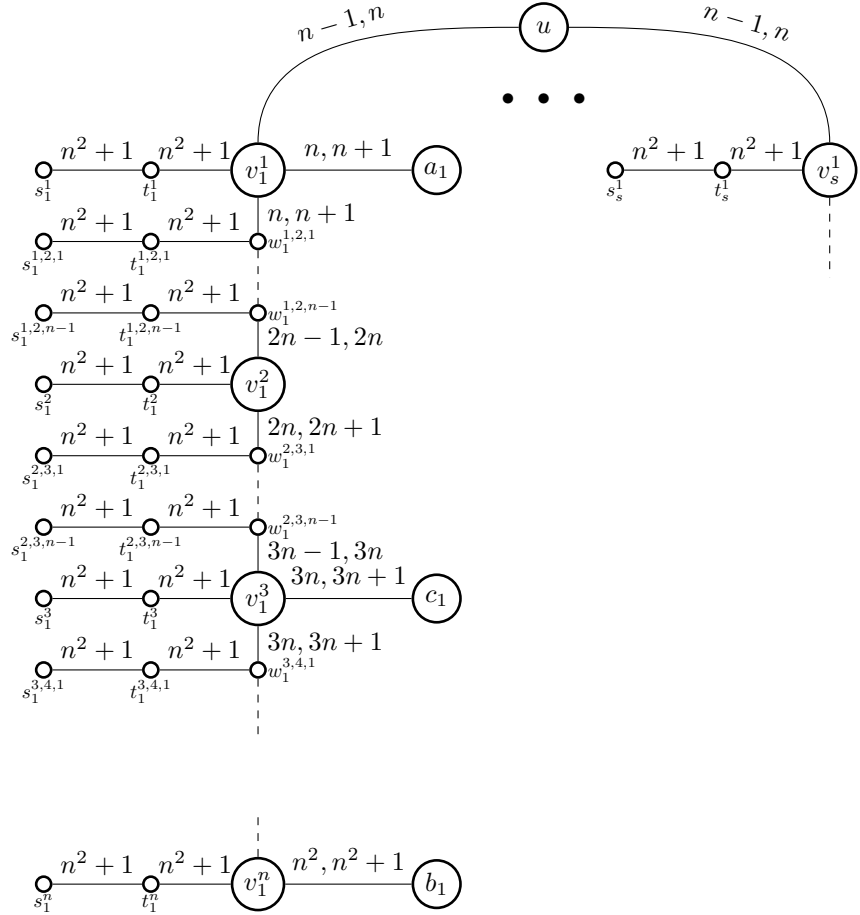
	
	\begin{remark}
		We can set $\lambda(t_i^jv_i^j)=\lambda(t_i^{j,j+1,k}w_i^{j,j+1,k})=\{n^2+2,n^2+3\}$ and $\lambda(s_i^jt_i^j)=\lambda(s_i^{j,j+1,k}t_i^{j,j+1,k})=\{n^2+1,n^2+2\}$, to obtain a construction with time intervals of size exactly~2.
	\end{remark}

	\section{Polynomial-time algorithms for subclasses of trees}
	\label{sec-POLY}
	
	In this section, we give polynomial-time algorithms for \temporalMD. We study temporal paths and stars with one time label per edge, and temporal subdivided stars with one time label per edge and where every label is in $\{1,2\}$. Recall that \temporalMD is already NP-complete on temporal subdivided stars with $2$-labeling (\Cref{thm-NPhardnessOnSubStars}), so these results are a first step for bridging the gap between polynomial-time and NP-hard.
	
	\subsection{Temporal paths}\label{sec-polyPaths}
	
	Throughout this subsection, we denote by $P_n$ a path on $n$ vertices $v_1,\dots,v_n$, with edges $v_iv_{i+1}$ for $1 \leq i \leq n-1$. Furthermore, we assume that $\lambda$ is a 1-labeling. \Cref{ALGTemporalPaths1label} constructs a minimum-size temporal resolving set $R$ of $\mathcal{P}=(P_n,\lambda)$.  The core of the algorithm consists in adding to $R$ the last vertex that can reach a leaf, then check if it separates everything in the two directions. If so, we can iterate on the vertices it cannot reach, and otherwise we have to add a vertex that separates the conflicting vertices before iterating. We denote $\lambda(v_iv_{i+1})$ by $t_i$ and the elements of $R$ by $\{r_1,\dots, r_{|R|}\}$, and we assume that if $r_i=v_j$ and $r_h=v_k$ for $h>i$, then $k>j$.  Consider vertex $v_i$, we say that $v_j$ is on its right (resp. left) side if $j>i$ (resp. $j<i$). The set of vertices on the left side of vertex $v_j$ is denoted by $\ell(v_j)$. 
	
\begin{algorithm}[th!]
	\caption{Temporal resolving set for temporal paths with 1-labeling}
	\label{ALGTemporalPaths1label}
	\SetKwInOut{KwIn}{Input}
	\SetKwInOut{KwOut}{Output}
	\KwIn{A temporal path $\mathcal{P}=(P_n,\lambda)$.}
	\KwOut{A minimum-size temporal resolving set $R$ of $\mathcal{P}$. }
 
	Set $v=v_1$ and $R=\emptyset$.\label{Alg1Setvv1}
 
 \While{true\label{Alg1While1}}{
 Set $s=v_i$ where $i$ is the largest integer such that $v_i$ reaches $v$. Add $s$ to $R$ and set $a=i$.\label{Alg1Setsvi}

  \eIf{within $\rea^R(s)$ every vertex has unique distance to $s$\label{Alg1If1}}
 {
  Set $w=v_j$ where $j$ is the smallest integer with $j>i$ for each $v_i\in \rea(s)$.\label{Alg1Then1}
  
 \If{$v_n\in \rea^R(s)$ or $v_n\in R$\label{Alg1If2}}{
 \Return $R$.\label{Alg1Then2}
 }
 }
 {\label{Alg1Else} Set $w=v_{b}$ where $v_b\in \rea^R(s)$ is the vertex which does not have unique distance among vertices in $\rea^R(s)$ to $s$ and among those vertices $b$ is minimal such that $b>a$.\label{Alg1Else2}}
 
   Let $v=w$.\label{Alg1Setvw}
}
   
	\end{algorithm}

	\begin{lemma}
		\label{lemSubpath}
		Let $\mathcal{P}=(P_n,\lambda)$ be a temporal path, $P_m$ be a subpath of $P_n$ containing one leaf of $P_n$, and $\mathcal{P}'=(P_m,\lambda_{|P_m})$. The temporal resolving number of $\mathcal{P}$ is at least as large as the temporal resolving number of $\mathcal{P}'$.
	\end{lemma}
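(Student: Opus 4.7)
Without loss of generality, assume $P_m$ contains the leaf $v_1$, so its vertex set is $\{v_1,\dots,v_m\}$; the other case is handled symmetrically. The plan is constructive: starting from any temporal resolving set $R$ of $\mathcal{P}$, I build a temporal resolving set $R'$ of $\mathcal{P}'$ with $|R'|\le|R|$, and applying this to an optimal $R$ yields the claim. Concretely, set $R'=R$ whenever $R\subseteq V(P_m)$, and $R'=(R\cap V(P_m))\cup\{v_m\}$ otherwise; a direct count gives $|R'|\le|R|$ in both cases (the nontrivial case replaces the at-least-one element of $R\setminus V(P_m)$ by the single vertex $v_m$).

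The core technical observation, specific to temporal paths with a $1$-labeling, is that a journey from $v_j$ to $v_i$ with $j>i$ exists if and only if the labels $t_i,t_{i+1},\dots,t_{j-1}$ are strictly decreasing, and in that case the arrival time equals $t_i$. Consequently, if some $v_j$ with $j>m$ reaches $v_i\in V(P_m)$ in $\mathcal{P}$, the required decrease holds in particular along $t_i,\dots,t_{m-1}$, so $v_m$ reaches $v_i$ in $\mathcal{P}'$; moreover, when $i<m$, the two temporal distances both equal $t_i$. This is exactly what licenses substituting $v_m$ for any vertex of $V(P_n)\setminus V(P_m)$.

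Armed with this, I would verify the two conditions for $R'$. \emph{Reaching} is immediate: each $v_i\in V(P_m)$ is reached in $\mathcal{P}$ by some $r\in R$, and if $r\notin V(P_m)$ the observation above promotes $v_m\in R'$ as a substitute. For \emph{separation}, take distinct $v_a,v_b\in V(P_m)$ separated in $\mathcal{P}$ by some $r\in R$. The case $r\in V(P_m)$ is automatic because temporal distances between vertices of $P_m$ involve only labels of $P_m$. When $r=v_j$ with $j>m$, I split on how many of $v_a,v_b$ are reached by $v_j$. If both are reached, their distances from $v_j$ are the distinct labels $t_a$ and $t_b$, and the distances from $v_m$ in $\mathcal{P}'$ are again these labels (or $0$ when an index equals $m$), pairwise distinct because labels are positive. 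If only $v_a$ is reached, then necessarily $b<a$---otherwise the suffix of the decreasing sequence witnessing $v_j\to v_a$ would also witness $v_j\to v_b$---so the break in $t_b,\dots,t_{j-1}$ must occur strictly below $a\le m$, hence already among labels of $P_m$; therefore $v_m$ reaches $v_a$ but not $v_b$ in $\mathcal{P}'$, separating them.

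The main delicate point I anticipate is the endpoint sub-case $a=m$ (or symmetrically $b=m$): the temporal distance from $v_j$ to $v_m$ in $\mathcal{P}$ equals the label $t_m$ of an edge outside $P_m$, whereas $\dist_t(v_m,v_m)=0$ in $\mathcal{P}'$. Separation still survives because time-labels are positive integers, so $0$ cannot coincide with any temporal distance to another vertex of $P_m$; nevertheless, writing the case split cleanly around this mismatch is the only non-mechanical piece of the argument, the rest reducing to the projection principle that $v_m$ inherits every useful journey of any $v_j$ with $j>m$.
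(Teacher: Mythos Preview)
Your proof is correct and follows essentially the same approach as the paper: replace all vertices of $R$ lying outside $P_m$ by the single ``inner'' endpoint of $P_m$ (the leaf of $P_m$ that is not a leaf of $P_n$), and verify that the resulting set still reaches and separates in $\mathcal{P}'$. The paper's separation argument is slightly slicker than yours: rather than computing arrival times explicitly and splitting into the sub-cases ``both reached / only one reached / endpoint $a=m$'', it simply observes that since the inner endpoint is a leaf of $P_m$, all vertices it reaches in $\mathcal{P}'$ automatically have pairwise distinct temporal distances from it, which dispatches every sub-case (including your delicate $a=m$ case) in one stroke.
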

	
	\begin{proof}
		We may assume without loss of generality that the temporal subpath $\mathcal{P}'$ contains vertices $v_a,v_{a+1},\dots, v_n$ for some $1\leq a\leq n$.
		Let $R\subseteq V(P_n)$ (resp. $R' \subseteq V(P_m)$) be a minimum-size temporal resolving set of $\mathcal{P}$ (resp. $\mathcal{P}'$). If $|R|\geq |R'|$, then the claim follows. Thus, assume by contradiction that $|R|< |R'|$. First observe that if $R\subseteq V(P_m)$, then $R$ is a temporal resolving set in $\mathcal{P}'$ and thus $|R|\geq|R'|$, a contradiction. Hence, we may assume that there exists some vertex $s\in R\setminus V(P_m)$. Let us consider the set $R''=\{v_a\}\cup R\cap V(P_m)$. Note that $|R''|\leq |R|$. First of all, every vertex in $\mathcal{P}'$ is reached by some vertex of $R''$. Secondly, if two vertices of $\mathcal{P}'$ are not separated by vertices in $R''\setminus\{v_a\}$, then they were separated by $s$ in $\mathcal{P}$. Moreover, in $R''$ they are separated by $v_a$. Indeed, if $w\in \rea(s)\cap V(P_m)$, then $w\in \rea(v_a)\cap V(P_m)$. Moreover, since $v_a$ is a leaf in $P_m$, every vertex in $\rea(v_a)\cap V(P_m)$ has a unique temporal distance to $v_a$. Therefore, $R''$ is a resolving set in $\mathcal{P}'$ with cardinality $|R''|<|R'|$, a contradiction. Thus, the claim follows.      
	\end{proof}
	
	\begin{theorem}
		\label{The:TempPath1label}
		There is an $\mathcal{O}(n)$ algorithm solving \temporalMD on temporal paths on $n$ vertices with a 1-labeling.
	\end{theorem}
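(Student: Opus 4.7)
The statement has three ingredients: the algorithm terminates in $\mathcal{O}(n)$ time, its output is a valid temporal resolving set, and it is minimum-size.

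\textbf{Termination and complexity.} I would first observe that the candidate vertex $v$ moves monotonically rightward across iterations. In the \emph{then}-branch the new $v$ is $v_j$ where $j$ is larger than every index in $\rea(s)$, while in the \emph{else}-branch we set $v = v_b$ with $b > a$; both are strictly to the right of the previous $s$. This bounds the number of iterations by $n$. Computing the new $s = v_i$ as the rightmost vertex reaching $v$ amounts to walking leftward from $v$ while time labels $t_{i-1}, t_{i-2}, \ldots$ are strictly decreasing; computing $\rea^R(s)$ and the first conflict vertex $v_b$ (or the first vertex not in $\rea(s)$) is done by a symmetric rightward walk. The regions scanned by successive iterations overlap in $\mathcal{O}(1)$ vertices, so the total work is $\mathcal{O}(n)$.

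\textbf{Correctness.} For reach, the algorithm halts only when $v_n \in \rea^R(s) \cup R$, and each $s$ added to $R$ covers all of $\rea^R(s)$, so when the algorithm terminates every vertex has been reached. For separation, I would exploit the following property of 1-labeled temporal paths: from $s = v_i$, the temporal distances to reachable right-side vertices $v_{i+1}, v_{i+2}, \ldots$ are $t_i, t_{i+1}, \ldots$, a strictly increasing sequence; symmetrically on the left. Hence the only possible distance collisions caused by a single $s$ are between one left-side and one right-side vertex of $\rea^R(s)$, and these are exactly the conflicts the algorithm detects in the \emph{if}-test. When a conflict is detected, moving $v$ to $v_b$ guarantees that the next added vertex $s'$ reaches $v_b$, and since $s'$ is on a different side of $v_b$ than the previous $s$, $s'$ separates $v_b$ from the vertex that caused the conflict. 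An induction on iterations then shows $R$ is separating.

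\textbf{Minimality.} This is the main obstacle. I would proceed by induction on $n$ using \Cref{lemSubpath}. Let $s_1 = v_{i_1}$ be the first vertex the algorithm adds and let $w_1$ be the value of $v$ at the end of the first iteration; let $\mathcal{P}'$ be the subpath of $\mathcal{P}$ from $w_1$ (or from the conflict vertex, in the \emph{else}-branch) to $v_n$. The inductive hypothesis applied to $\mathcal{P}'$, together with the algorithm's recursive behaviour, yields that the algorithm outputs $s_1$ plus a minimum temporal resolving set $R'$ of $\mathcal{P}'$. It then suffices to show that any temporal resolving set $R^*$ of $\mathcal{P}$ satisfies $|R^*| \geq 1 + |R'|$. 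For this, I would use an exchange argument: any $R^*$ must contain a vertex $s^*$ that reaches $v_1$, and by the choice of $s_1$ as the rightmost such vertex, replacing $s^*$ by $s_1$ in $R^*$ keeps it a resolving set, because $s_1$ reaches and (on its left side) uniquely separates every vertex that $s^*$ did. After this exchange, the remaining vertices of $R^* \setminus \{s_1\}$ together with the leaf of $\mathcal{P}'$ nearest $s_1$ form a temporal resolving set of $\mathcal{P}'$ by the argument used in the proof of \Cref{lemSubpath}, so $|R^*| - 1 \geq |R'|$ by induction.

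The subtle case is the \emph{else}-branch, where $\mathcal{P}'$ must be defined so that a resolving set of $\mathcal{P}'$ combined with $s_1$ still separates the conflicting pair inside $\rea^R(s_1)$. I expect this to follow because the conflict vertex $v_b$ becomes the leaf of $\mathcal{P}'$, forcing the next vertex chosen in $\mathcal{P}'$ to reach $v_b$, which in turn separates the pair from the opposite direction as $s_1$. Getting this reduction precisely right is where the bulk of the effort will go.
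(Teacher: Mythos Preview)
Your complexity and correctness sketches are sound and match the paper's reasoning. The minimality argument in the \emph{then}-branch is also essentially the paper's: because no vertex of $\ell(s_1)\cup\{s_1\}$ reaches the subpath $\mathcal{P}'=\mathcal{P}\setminus\rea(s_1)$, the vertex that any $R^*$ spends to reach $v_1$ is useless for $\mathcal{P}'$, so $|R^*|\geq 1+|R'|$ follows from \Cref{lemSubpath}. (Your phrasing ``together with the leaf of $\mathcal{P}'$'' is misleading here: you do \emph{not} need to add the leaf in this branch, and if you did, the arithmetic would be off by one.)

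The genuine gap is in the \emph{else}-branch. There your subpath $\mathcal{P}'$ starts at the conflict vertex $v_b$, which lies \emph{inside} $\rea(s_1)$. Hence the vertex of $R^*$ that reaches $v_1$ can simultaneously reach into $\mathcal{P}'$, and the clean ``one wasted vertex $+$ induction on $\mathcal{P}'$'' accounting no longer works: applying the Lemma~\ref{lemSubpath} construction to $R^{**}\setminus\{s_1\}$ forces you to add the leaf $v_b$, yielding only $|R'|\leq |R^*|$, not $|R^*|-1\geq |R'|$. Your exchange $s^*\to s_1$ (which, incidentally, does preserve the resolving property, since any $r\in R^*$ right of $s_1$ that reaches $v$ cannot reach any $u$ left of $s_1$ without contradicting the choice of $s_1$) does not close this gap.

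The paper resolves this by a case analysis on the three labels $t_{r-1},t_r,t_{r+1}$ around $v_r=v_b$ (after first reducing to the situation where the left conflict vertex is $v_1$). In two of the cases one shows that \emph{two} vertices of any $R^*$ are forced inside $\ell(r_2)\cup\{r_2\}$, and these do not reach $\mathcal{P}\setminus(\rea(r_1)\cup\rea(r_2))$, recovering the count. In another case the labeling is modified (all labels doubled and $t_r$ decremented by one so it becomes the unique odd label) to decouple the left and right halves. Your proposal does not anticipate this level of case work, and a single uniform reduction to a suffix subpath will not give the bound you need.
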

	
	\begin{proof}
In the following, we first show that \Cref{ALGTemporalPaths1label} returns a temporal resolving set $R$ of $\mathcal{P=}(P_n,\lambda)$. After that, we prove that $R$ is minimum-sized and finally, that the algorithm has linear-time complexity.

First of all, consider vertices in $\rea(r_1)$. Note that if there are vertices $u,v_b\in \rea(r_1)$ such that $\dist_t(r_1,u)=\dist_t(r_1,v_b)$, then one of them is on the left side of $r_1$ and other one is on its right side. Let us assume, without loss of generality, that $u$ is on the left side of $r_1$.  Moreover, no third vertex $w$ can have $\dist_t(r_1,u)=\dist_t(r_1,v_b)=\dist_t(r_1,w)$. Let us assume that $v_b$ is the vertex with the smallest index on the right side of $r_1$ such that it is not separated by $r_1$ from some other vertex (in this case, from $u$).
		
In this case, \Cref{ALGTemporalPaths1label} has chosen $w=v_b$ on Step~\ref{Alg1Else2} and set $v=w$ after that on Step~\ref{Alg1Setvw}. Hence, in the following while-loop, we choose $r_2$ as the rightmost vertex which reaches $v(=v_b)$. Furthermore, $r_2$ cannot reach $u$. Indeed, since $r_1$ cannot separate 
$u$ and $v_b$, there are two edges with the same time label on the path from $u$ to $v_b$. Therefore, on the path from $r_2$ to $u$, there are two edges with the same time label. Hence, $r_2$ separates $v_b$ from $u$. Consequently, if there were any other vertices in $\rea^{\{r_1\}}(r_1)$ which were not separated by $r_1$, then they would be separated by $r_2$. A similar argument works for all pairs $r_i,r_{i+1}$. Note that in the end, either we choose $r_{|R|}=v_n$, or $r_{|R|}$ separates every vertex in $\rea^R(r_{|R|})$. Hence, we eventually enter the if-clause on Step~\ref{Alg1If2} and return $R$.
		
		\smallskip
		We now show that there does not exist any resolving set of smaller size than $R$ in $\mathcal{P}$. We do this by induction on the number $n$ of vertices. First of all, \Cref{ALGTemporalPaths1label} outputs a temporal resolving set of size $1$ when $n\in\{1,2\}$, which is optimal. Thus, we assume from now on that it outputs a minimum-size temporal resolving set for $n\leq n'$.
		
		Let $n=n'+1$, and $R$ be the temporal resolving set constructed using \Cref{ALGTemporalPaths1label} on $\mathcal{P}=(P_n,\lambda)$. Assume first that $r_1$ separates every vertex in $\rea(r_1)$. Observe that if $|R|=1$, then it is minimum-size. Hence, we may assume that $\mathcal{P}'=\mathcal{P} \setminus \rea(r_1)$ (where $\mathcal{G} \setminus V'$ for a temporal graph $\mathcal{G}=(V,E,\lambda)$ denotes the temporal subgraph $\mathcal{G'}=(V \setminus V',E \setminus \{uv~:~u \in V' \mbox{ or } v\in V'\},\lambda)$) is nonempty. By induction, \Cref{ALGTemporalPaths1label} outputs a minimum-size temporal resolving set $R\setminus\{r_1\}$ of $\mathcal{P}'$. Observe that $\ell(r_1)\cap \rea(w)=\emptyset$ for any $w$ on the rightside of $r_1$. Moreover,  we require at least one vertex in set $\ell(r_1)\cup\{r_1\}$ in any resolving set of $\mathcal{P}$ to reach vertex $v_1$. Observe that \Cref{ALGTemporalPaths1label} returns the temporal resolving set $R\setminus\{r_1\}$ for $\mathcal{P}\setminus \rea(r_1)$. By induction, set $R\setminus\{r_1\}$ has minimum size. By \Cref{lemSubpath}, we require in any temporal path containing $\mathcal{P}\setminus\rea(r_1)$ (and having $v_n$ as a leaf) at least $|R|-1$ vertices. Furthermore, by our observations, we require in a set $\ell(r_1)\cup\{r_1\}$ at least one vertex to reach $v_1$. Furthermore, since these vertices do not reach any vertex in $V(\mathcal{P})\setminus\rea(r_1)$, we require at least $|R|$ vertices in a resolving set of $\mathcal{P}$, as claimed.

		Assume next that there are vertices $v_\ell$ and $v_r$ in $\rea(r_1)$ which are not separated by $r_1$. Further assume that $v_\ell$ (resp. $v_r$) is on the left (resp. right) side of $r_1$. Consequently, $r_2$ is the rightmost vertex which reaches $v_r$. We show that if $v_\ell\neq v_1$, then the claim follows. Assume that $\ell\geq2$. Consider the temporal path $\mathcal{P}'=\mathcal{P} \setminus \{v_1\}$. Note that $r_1$ is the rightmost vertex which reaches $v_2$. Moreover, $r_1$ does not separate vertices $v_\ell$ and $v_r$. Thus, \Cref{ALGTemporalPaths1label} outputs $R$ as a resolving set for $\mathcal{P}'$. By our induction hypothesis, $R$ is minimum-size. By \Cref{lemSubpath}, we know that $R$ is at least as large as a minimum-size temporal resolving set of $\mathcal{P}'$. Thus, $R$ is also a minimum-size temporal resolving set for $\mathcal{P}$. Note that this implies that $r_1$ separates every pair of vertices in $\rea(r_1)$ except for $(v_1,v_r)$.   
		
		We now need to analyze several cases depending on whether $t_{r-1}\leq t_{r}$ and whether $t_r\leq t_{r+1}$. We distinguish four cases and for all of them, we conclude 
		that at least $|R|$ vertices are necessary in a temporal resolving set of $\mathcal{P}$. 
		
		\noindent Our aim is to conclude that temporal resolving set is of size at least $|R|$ in $\mathcal{P}$ in all of the cases.
		
		Assume first that $t_{r-1}\leq t_{r}$ and $t_r\leq t_{r+1}$. Thus, $r_2=v_{r+1}$.  Furthermore, $r_2$ separates every vertex in $\rea(r_2)\setminus\{v_r\}$. Indeed since $r_1$ reaches $v_r$, we have $\rea(r_2)\setminus\{v_r\}=\rea(r_2)\setminus \ell(r_2)$, and $r_2$ pairwise separates every vertex on its right side.
		Denote by $\mathcal{P}_r$ the temporal subpath $\mathcal{P} \setminus (\rea(r_1) \cup \rea(r_2))$. Note that \Cref{ALGTemporalPaths1label} outputs $R\setminus\{r_1,r_2\}$ as a temporal resolving set of $\mathcal{P}_r$ and, by induction, this has minimum cardinality in $\mathcal{P}_r$. Note that if $|R|=2$, then $R$ has the smallest possible size in $\mathcal{P}$. Hence, we assume that $|R|>2$. By \Cref{lemSubpath}, any subpath of $\mathcal{P}$ containing $\mathcal{P}'$ and leaf $v_n$ has temporal resolving number at least $|R|-2$.
		Furthermore, we require at least two vertices in a temporal resolving set of $\mathcal{P}$ among vertices $\ell(r_2)\cup\{r_2\}$. Note that none of these vertices reach any vertex in $V(\mathcal{P}')$. Thus, at least $|R|$ vertices are necessary in a temporal resolving set of $\mathcal{P}$ as claimed.
		
		Consider now the case with $t_{r-1}= t_{r}$ and $t_r> t_{r+1}$. We have $\rea(r_1)=\{v_1,\dots,v_r\}$. Consider subpath $\mathcal{P}'=\mathcal{P}\setminus\rea(r_1)$. Note that \Cref{ALGTemporalPaths1label} outputs $R\setminus\{r_1\}$ for $\mathcal{P}'$ since $r_2\neq v_{r+1}$ as $t_r>t_{r+1}$. Moreover, $R\setminus\{r_1\}$ is a minimum-sized temporal resolving set by induction assumption. By \Cref{lemSubpath}, any subpath of $\mathcal{P}$ containing $\mathcal{P}'$ and $v_n$ requires at least $|R|-1$ vertices in any temporal resolving set. Moreover, we require at least one vertex in $\ell(r_1)\cup\{r_1\}$ for any temporal resolving set of $\mathcal{P}$. Note that vertices in $\ell(r_1)\cup\{r_1\}$ do not reach vertices in $V(\mathcal{P}')$. Hence, we require at least $|R|$ vertices in any minimum-sized temporal resolving set of $\mathcal{P}$.
		
		Consider next the case with $t_{r-1}< t_{r}$ and $t_r> t_{r+1}$. We have $\rea(r_1)=\{v_1,\dots,v_{r+1}\}$. Consider $\mathcal{P}'$ such that $V(\mathcal{P}')=\{v_{r-1},\dots,v_n\}$, $E(\mathcal{P}')\subseteq E(\mathcal{P})$, $\lambda(v_{r-1}v_r)=t_r$ and $\lambda(v_{r+i}v_{r+i+1})=t_{r+i}$ for each $i\geq0$. Note that \Cref{ALGTemporalPaths1label} now outputs the temporal resolving set $R'=(R\cup\{v_r\})\setminus\{r_1\}$. By induction assumption, $R'$ has minimum cardinality in $\mathcal{P}'$. In particular, any temporal resolving set of $\mathcal{P}'$ requires one of the vertices $v_{r-1}$ or $v_r$ as these are the only vertices which reach $v_{r-1}$. Consider a temporal resolving set $R''$ of $\mathcal{P}$. Let $R^*=R''\cap \ell(v_{r+1})$. Observe that $(R''\setminus R^*)\cup\{v_r\}$ is a temporal resolving set of $\mathcal{P}'$. Hence, $|R''|-|R^*|+1\geq |R|$. Since we require at least one vertex in $R^*$ to reach $v_1$, we have $|R''|\geq|R|$. Therefore, $|R|$ has the minimum cardinality over temporal resolving sets of $\mathcal{P}$, as claimed.

		Next, we consider the case where $t_{r-1}> t_{r}$ and $t_r\neq t_{r+1}$. Observe that $v_{r+1}\not\in \rea(r_1)$. Since $v_\ell=v_1$, $r_1$  separates $v_{r+1}$ from other vertices in $\rea(r_1)$.
		Assume next that all time labels have even values. This has no effect on the temporal resolving set or the algorithm (we can multiply every time label by two without changing any reachability in the temporal path). We do the following modification to the time labeling of $\mathcal{P}$, obtaining path $\mathcal{P}_m$. We change  $t_r$ into $t'_r= t_r-1$. Note that since we assumed that every time label has even value, time label $t'_r$ has an odd value unlike all other time labels, and $t_{r-1}>t'_r$. Moreover if $t_{r+1}> t_r$, then $t_{r+1}> t_r'$ and if $t_{r+1}< t_r$, then $t_{r+1}< t_r'$. Note that in this change we maintain $v_r$ and $v_1$ unseparable by $r_1$ and every set $\rea(v_i)$ remains unchanged.
		
		Observe that \Cref{ALGTemporalPaths1label} returns the same temporal resolving set $R$ for $\mathcal{P}_m$ since $r_2$ is still the rightmost vertex which reaches $v_r$. Moreover, any temporal resolving set for $\mathcal{P}$ remains as a temporal resolving set for $\mathcal{P}_m$ since sets $\rea(v_i)$ remain unchanged and, since the label $t'_{r}$ is the only odd label, it cannot cause any two vertices to become unseparated. Let $\mathcal{P}_m'=\mathcal{P}_m \setminus \rea(r_1)$. Observe that \Cref{ALGTemporalPaths1label} outputs a temporal resolving set $R'=R\setminus \{r_1\}$ for $\mathcal{P}_m'$. Furthermore, by the induction hypothesis, it is minimum-size. Consider next a temporal resolving set $R''$ of $\mathcal{P}_m$. Assume first that there are at least two vertices in $\rea(r_1)\cap R''$ and let us denote the rightmost of them by $w$. Denote $\mathcal{P}_w=\mathcal{P}_m \setminus\ell(w)$. Note that $R''\setminus \ell(w)$ is a temporal resolving set of $\mathcal{P}_w$. Observe that $\mathcal{P}_m'$ is a subgraph of $\mathcal{P}_w$. Thus, by \Cref{lemSubpath}, we have $|R''\setminus\ell(w)|\geq |R|-1$ and $|R''|\geq|R|$. Assume then that we have $|R''\cap \rea(r_1)|=1$. Let $w_1\in R''\cap \rea(r_1)$. Note that if $v_r\in \rea(w_1)$, then $w_1$ does not separate $v_\ell$ and $v_r$. Thus, some other vertex in $R''\setminus\rea(r_1)$ reaches $v_r$ and due to the odd time label,  $R''\setminus \rea(r_1)$ is a temporal resolving set of $\mathcal{P}_m'$. Hence, by \Cref{lemSubpath}, we have $|R''|\geq|R|$, allowing us to conclude that $R$ is a minimum-size temporal resolving set of $\mathcal{P}_m$. If we have a temporal resolving set $R^*$ of $\mathcal{P}$ with $|R^*|<|R|$, then $R^*$ is also a temporal resolving set of $\mathcal{P}_m$ with $|R^*|<|R|$, a contradiction.
		
		As the last case, we consider $t_{r-1}> t_{r}= t_{r+1}$. Again, observe that $v_{r+1}\not\in \rea(r_1).$ Furthermore, $v_{r+1}=r_2$. Hence, together, $r_1$ and $r_2$ separate all vertices in $\rea(r_1)\cup\rea(r_2)$. Furthermore, since at least one vertex in $\ell(r_1)\cup\{r_1\}$ is required, note that if $|R|=2$, then it has minimum size in $\mathcal{P}$. Thus, assume that $|R|\geq3$. Let $\mathcal{P}'=\mathcal{P}\setminus(\rea(r_1)\cup\rea(r_2))$. Note that set $R\setminus\{r_1,r_2\}$ is a minimum-size temporal resolving set of $\mathcal{P}'$ since $r_1$ and $r_2$ separate all vertices in $\rea(r_1)\cup\rea(r_2)$. Furthermore, \Cref{ALGTemporalPaths1label} outputs set $R\setminus\{r_1,r_2\}$ for $\mathcal{P}'$. By the induction assumption, set $R\setminus\{r_1,r_2\}$ is a smallest temporal resolving set of $\mathcal{P}'$. Hence, by \Cref{lemSubpath}, any subpath of $\mathcal{P}$ containing $\mathcal{P}'$ and leaf $v_n$ requires at least $|R|-2$ vertices in any temporal resolving set. Furthermore, at least two vertices $\ell(r_2)\cup\{r_2\}$ are required in any temporal resolving set of $\mathcal{P}$, and these vertices do not reach any vertex in $V(\mathcal{P}')$. Thus, $\mathcal{P}$ does not have any temporal resolving set with cardinality less than $|R|$, as claimed.
			
		Finally, we show that \Cref{ALGTemporalPaths1label} has linear time complexity. First of all, the while-loop ends at some point since every temporal path has a temporal resolving set by taking every vertex in the underlying path. Secondly, Step~\ref{Alg1Setsvi} uses at most $i+1-a$ comparisons and in total at most $2n$ comparisons. In Step~\ref{Alg1If1}, observe that sets $\rea^R(s)$ do not overlap. Thus, each vertex is considered only once. Moreover, the time labels on the left and the right side of $s$ are ordered from small to large. Thus, checking if each time label has a unique value can be done in linear time on $|\rea^R(s)|$. Again, in Step~\ref{Alg1Then1}, the sets $\rea(s)\setminus \ell (s)$ do not overlap. Thus, this step takes at most linear time on $n$ in total. Finally, all the other steps take at most constant time. Hence, the algorithm has linear-time complexity.
	\end{proof}
	
	The following lemma gives some structure on the minimum-size temporal resolving sets of temporal paths, with respect to the one output by \Cref{ALGTemporalPaths1label}. In particular, it states that the constructed temporal resolving set $R$ places each vertex in $R$ as far away from the leaf $v_1$ as possible. It will be used in the case of subdivided stars, allowing us to reuse \Cref{ALGTemporalPaths1label} to find a partial solution.
	
	\begin{lemma}
		\label{LemPathGreedyness}
		Let $\mathcal{P}$ be a temporal path on $n\geq2$ vertices with $1$-labeling $\lambda$ on vertices $v_1,\dots,v_n$ and edges $v_iv_{i+1}$ for $ 1\leq i\leq n-1$ where $v_i$ is on the left side of $v_{i+1}$ for each $i$. Let $R=\{r_1,\dots,r_{|R|} \}$ be the temporal resolving set output by \Cref{ALGTemporalPaths1label}, where $r_i$ is on the left side of $r_{i+1}$ for each $i$. Let $R'=\{r_1',\dots,r_{|R|}'\}$ be another temporal resolving set of $\mathcal{P}$. We have $r_i'\in\ell(r_i)\cup\{r_i\}$ for each $i$.
	\end{lemma}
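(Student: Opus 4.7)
The plan is to prove the lemma by induction on $i$. For the base case $i=1$, observe that in a $1$-labeled path the set of vertices admitting a journey to any fixed target $v_c$ forms a single contiguous interval: reaching $v_c$ from the left requires a strictly increasing sequence of time labels $t_j<t_{j+1}<\cdots<t_{c-1}$, while reaching it from the right requires a strictly decreasing sequence $t_c>t_{c+1}>\cdots>t_{j-1}$. Since \Cref{ALGTemporalPaths1label} selects $r_1$ as the rightmost vertex reaching $v_1$, the vertices reaching $v_1$ form exactly the prefix $\{v_1,\dots,r_1\}$. Because $R'$ must reach $v_1$ and is written in left-to-right order, its leftmost element $r_1'$ must lie in this prefix, so $r_1'\in\ell(r_1)\cup\{r_1\}$.

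For the inductive step, I assume $r_j'\in\ell(r_j)\cup\{r_j\}$ for all $j<i$ and aim to derive a contradiction by supposing $r_i'$ lies strictly to the right of $r_i$ in path order. At iteration $i$ the algorithm identifies a critical vertex $w_i$, either (\emph{reaching case}) the leftmost vertex not reached by $\{r_1,\dots,r_{i-1}\}$, or (\emph{separating case}) a vertex $w_i\in\rea^{\{r_1,\dots,r_{i-1}\}}(r_{i-1})$ sharing its temporal distance to $r_{i-1}$ with some $u_i\in\rea^{\{r_1,\dots,r_{i-1}\}}(r_{i-1})$ on the opposite side of $r_{i-1}$. In both situations $r_i$ is the rightmost vertex reaching $w_i$, so by the base-case observation the set of vertices reaching $w_i$ is an interval of the form $[L,r_i]$.

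The reaching case dispatches quickly. Since no $r_j$ with $j<i$ reaches $w_i$ and $r_j\leq r_{i-1}<r_i$, each such $r_j$ must lie strictly left of $L$; the inductive hypothesis then forces $r_j'\leq r_j<L$, while for $j\geq i$ we have $r_j'\geq r_i'>r_i$. Hence no vertex of $R'$ reaches $w_i$, contradicting that $R'$ is a temporal resolving set. For the separating case, the same interval argument shows that $r_j'$ with $j\leq i-2$ reaches neither $u_i$ nor $w_i$ (as both lie outside $\rea(r_j)$ for such $j$), and that $r_j'$ with $j\geq i$ reaches neither vertex, since reaching $u_i$ from the right strictly refines reaching $w_i$, which already fails. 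This isolates $r_{i-1}'$ as the only element of $R'$ that could possibly separate $u_i$ from $w_i$.

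The main obstacle is therefore to rule out that $r_{i-1}'$, despite lying at or to the left of $r_{i-1}$, separates $u_i$ from $w_i$. The configurations with $u_i<r_{i-1}'\leq r_{i-1}$ are immediate because $\dist_t(r_{i-1}',u_i)=t_{u_i}=t_{w_i-1}=\dist_t(r_{i-1}',w_i)$, so $r_{i-1}'$ simply inherits the conflict. The delicate configurations are those in which $r_{i-1}'$ lies strictly left of $u_i$ or reaches $w_i$ without reaching $u_i$; I plan to handle them by mirroring the case split from the proof of \Cref{The:TempPath1label} on the relative order of $t_{w_i-1},t_{w_i},t_{w_i+1}$ around $w_i$, and to use \Cref{lemSubpath} applied to the sub-path $\{v_1,\dots,r_{i-1}\}$ to argue that such a leftward placement of $r_{i-1}'$ leaves $R'$ with too few resources to simultaneously resolve both this prefix and the suffix beyond $r_i$, yielding the desired contradiction.
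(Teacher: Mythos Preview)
Your base case and the ``reaching'' sub-case of the induction are correct and match the paper. The problem lies in the ``separating'' sub-case, which you leave genuinely unfinished.

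First, a small oversight: in your ``immediate'' configuration $u_i<r_{i-1}'\le r_{i-1}$ you assert $\dist_t(r_{i-1}',w_i)=t_{w_i-1}$, but this presupposes $r_{i-1}'$ actually reaches $w_i$, which need not hold when $r_{i-1}'<r_{i-1}$. This is easy to patch (if $r_{i-1}'$ fails to reach $w_i$ then nobody in $R'$ does), but you should say it. Second, your ``delicate'' configuration ``reaches $w_i$ without reaching $u_i$'' is vacuous: whenever $u_i<r_{i-1}'\le r_{i-1}$, the leftward journey from $r_{i-1}'$ to $u_i$ is a suffix of the one from $r_{i-1}$, so $r_{i-1}'$ always reaches $u_i$; and if $r_{i-1}'\le u_i$ then reaching $w_i$ would pass through $u_i$.

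The main gap is that for the remaining case $r_{i-1}'\le u_i$ you only sketch a plan (a case split on $t_{w_i-1},t_{w_i},t_{w_i+1}$ together with \Cref{lemSubpath}). This plan is both vague and unnecessary. The observation you already exploit, namely $t_{u_i}=t_{w_i-1}$, finishes the argument directly: any vertex $x\le u_i$ would need a strictly increasing run $t_x<\cdots<t_{u_i}<\cdots<t_{w_i-1}$ to reach $w_i$, which is impossible since $t_{u_i}=t_{w_i-1}$ and there is at least one edge strictly between $u_i$ and $w_i$ (indeed $u_i<r_{i-1}<w_i$). Hence $r_{i-1}'\le u_i$ cannot reach $w_i$, and since you already showed no other $r_j'$ reaches $w_i$, reachability fails for $R'$. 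This is exactly how the paper closes the case (phrased as ``there are two edges with the same time label on the path from $u$ to $w$''). No appeal to \Cref{lemSubpath} or to the local label pattern around $w_i$ is needed; replace your final paragraph by this two-line observation and the proof is complete.
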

	
	\begin{proof}
		Recall that by \Cref{The:TempPath1label}, Algorithm \ref{ALGTemporalPaths1label} returns a temporal resolving set of minimum size for $\mathcal{P}$.
		Consider first the case with $|R|=1$. Notice that $r_1$ is the rightmost vertex which reaches $v_1$. Thus, the claim holds in this case. Consequently, by the same argument, we have that $r_1'\in \ell(r_1)\cup\{r_1\}$ even when $|R|>1$. Assume next that the claim does not hold for some $\mathcal{P}$, $R$ and $R'$. Furthermore, let $r_i'\not\in \ell(r_i)\cup\{r_i\}$ and $r_j'\in \ell(r_j)\cup\{r_j\}$ for every $j<i$. Let us assume first that $r_{i-1}=r_{i-1}'$. Denote by $R_j=\{r_1,\dots,r_j\}$ and $R_j'=\{r_1',\dots,r_j'\}$ for any $j\leq|R|$. 
		
		Consider first the case where $r_{i-1}$ separates all vertices in $\rea^{R_{i-1}}(r_{i-1})$. Let $w$ be the leftmost vertex which is not reached by $R_{i-1}$. In this case, \Cref{ALGTemporalPaths1label} chooses $r_i$ as the rightmost vertex which reaches $w$. Since also $r_i'$ reaches $w$, we have $r_i'\in\ell(r_i)\cup\{r_i\}$, a contradiction. Hence, there exist vertices $u,w\in \rea^{R_{i-1}}(r_{i-1})$ which are not separated $r_{i-1}$. Let $u\in \ell(r_{i-1})$ and $w\in \rea(r_{i-1})\setminus\ell(r_{i-1})$. Furthermore, we assume that $w$ is the leftmost vertex with these properties. Notice that $r_i$ is the rightmost vertex which reaches $w$. Moreover, if $u,w\in \rea^{R_{i-1}'}(r_{i-1}')$ and $r_{i-1}'\neq u$, then $r_{i-1}'$ does not separate $u$ and $w$. Thus, vertex $r_{i}'$ reaches $w$. However, since $r_i$ was the rightmost such vertex, we have $r_i'\in \ell(r_i)\cup\{r_i\}$, a contradiction. Furthermore, if $r_{i-1}'=u$, then $w\not\in \rea(r_{i-1}')$. Indeed, there are two edges with the same time label on the path from $u$ to $w$ since $r_{i-1}$ does not separate them. Therefore, to reach $w$, we have $w\in \rea(r_i')$. Again, this leads to a contradiction since $r_i$ was the rightmost vertex reaching $w$. Hence, $u\not\in \rea^{R_{i-1}'}(r_{i-1}')$ and $r_{i-1}'$ is on the rightside of $u$. In particular, this implies that $u\in\rea(r_{i-2}')$. Furthermore, we have $u\not\in \rea(r_{i-2})$. Indeed, otherwise $r_{i-2}$ or $ r_{i-1}$ would separate $w$ and $u$. Moreover, $u$ is on the rightside of $r_{i-2}$. Since $u\in\rea(r_{i-2}')$ but $u\not\in \rea(r_{i-2})$, we have $r_{i-2}'$ on the rightside of $r_{i-2}$. However, this is a contradiction with the minimality of $i$. Therefore, the claim holds. 
	\end{proof}
		
	\subsection{Temporal stars}\label{sec-polyStars}
	
	In this subsection, we give polynomial-time algorithms for finding minimum-size temporal resolving sets for temporal stars with $1$-labeling $\lambda$ and temporal subdivided stars with $1$-labeling using only values $1$ and $2$. For both these classes, the central vertex is denoted by $c$.
	
	\begin{theorem}
		\label{thm-polyStar}
		Let $S$ be a star, $\lambda$ be a 1-labeling, and $X$ be a maximum-size set of leaves of $S$ such that, for distinct $u,v \in X$, $\lambda(cu) \neq \lambda(cv)$. The set $V(S)\setminus X$ is a minimum-size  temporal resolving set of $(S,\lambda)$.
	\end{theorem}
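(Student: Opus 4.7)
The proof has two parts: showing $R := V(S) \setminus X$ is a temporal resolving set, then showing no smaller one exists. The key structural observation on which both parts rest is that, on a temporal star with a 1-labeling, the temporal distance from a leaf $u$ to a leaf $v \neq u$ equals $\lambda(cv)$ if $\lambda(cv) > \lambda(cu)$ and $\infty$ otherwise, while $\dist_t(c,v) = \lambda(cv)$.

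For the first part, I would observe that $c \in R$ reaches every leaf, so the reaching condition holds. Separation of two vertices in $R$ is trivial, and a leaf $u \in X$ is separated from any $w \in R$ by $w$ itself (distance $0$ to $w$ versus positive). For two distinct leaves $u, v \in X$, by the maximality clause they satisfy $\lambda(cu) \neq \lambda(cv)$, so $c$ separates them. Hence $R$ is a temporal resolving set of size $|V(S)| - |X|$.

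For optimality, I would first show that if $u, v$ are two leaves with $\lambda(cu) = \lambda(cv)$, then no vertex other than $u$ or $v$ can separate them: from $c$ their temporal distances both equal the common label, and from any third leaf $w$ the temporal distances to $u$ and to $v$ are simultaneously either this common label or $\infty$ (depending on whether the common label exceeds $\lambda(cw)$). Grouping the leaves into equivalence classes by their label and denoting the number of classes by $k$, this shows that any temporal resolving set $R'$ must contain at least $|C| - 1$ leaves from each class $C$, so $|R' \cap (V(S) \setminus \{c\})| \geq (n-1) - k$, where $n = |V(S)|$. Note $|X| = k$, so the target bound is $|R'| \geq n - k$.

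I would then split on whether $c \in R'$. If $c \in R'$, the bound is immediate: $|R'| \geq 1 + (n - 1 - k) = n - k$. The only genuinely non-trivial case is $c \notin R'$. Here, suppose for contradiction $|R'| = n - 1 - k$; then since at most one leaf per class is outside $R'$ and the total count outside equals $k$, exactly one leaf per class is outside $R'$. Let $t_1$ be the minimum label used on edges incident to $c$, and let $u_1$ be the unique leaf of that class not in $R'$. For $u_1$ to be reached, some $w \in R'$ must satisfy $\lambda(cw) < \lambda(cu_1) = t_1$, but every leaf has label $\geq t_1$ and $c \notin R'$, a contradiction. Hence $|R'| \geq n - k$ in this case too. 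The only subtlety is keeping track of the reachability obstruction in the case $c \notin R'$; the separation bookkeeping itself is straightforward.
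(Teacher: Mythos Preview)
Your proof is correct and follows essentially the same approach as the paper: both show that $c\in R$ reaches and separates everything outside $R$, then argue minimality by observing that at most one leaf per label class can be omitted and that omitting one from every class while also omitting $c$ leaves the minimum-label leaf unreached. Your write-up is in fact slightly more explicit than the paper's in justifying why no third vertex can separate two equally-labeled leaves and in spelling out the case split on $c\in R'$ versus $c\notin R'$.
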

	
	\begin{proof}
    Throughout this proof, we denote by $S_n$ the star with central vertex $c$ and $n$ leaves $v_1,\dots,v_n$. We furthermore assume that the time labeling $\lambda$ uses time labels from 1 to $m$ and does not contain any gaps (we can remove gaps without changing reachability); since $\lambda(cv_i)$ contains only one integer, we will use it to denote the integer it contains by abuse of notation. We can further assume without loss of generality that $\lambda(cv_i)\leq \lambda(cv_{i+1})$ for each $1\leq i\leq n-1$. Let $VL_j = \{v_i~:~\lambda(cv_i)=j\}$ and $L_j = | VL_j |$. We denote by $VL_j'$ a set $VL_j\setminus \{v\}$ where $v$ is an arbitrary vertex of $VL_j$.
		
		We first prove that $R=V(S_n) \setminus X$ from the statement of the theorem is a temporal resolving set of $\mathcal{S}=(S,\lambda)$. First, we have $V(S_n)\subseteq \rea(c)$ and $c\in R$.
		Furthermore, by the definition of the sets $VL_j'$, $c$ separates every vertex in $V(S_n)\setminus R$. Hence, $R$ is a temporal resolving set of $\mathcal{S}$.
		
		Next, let us prove the minimality of $R$. First, assume that for some $j$, we have $u,v\in VL_j\setminus R$. However, we have $\lambda(cu)=\lambda(cv)=j$. Thus, these vertices cannot be separated and we have $|VL_j\setminus R|\leq 1$ for each $j$. Assume then that $|VL_j\setminus R|= 1$ for each $1\leq j\leq m$ and that $c\not\in R$. However, now there is a vertex $v\in VL_1\setminus R$ and the only vertices that can reach $v$ are $v$ and $c$ since $1$ is the smallest label. Thus, no vertex in $R$ reaches $v$ and hence, we cannot simultaneously have $|VL_j\setminus R|= 1$ for each $1\leq j\leq m$ and  $c\not\in R$. Therefore, $R$ has minimum cardinality.
	\end{proof}
	
		We next consider subdivided stars together with a $1$-labeling $\lambda$ using only values $1$ and $2$. In particular, we present a polynomial-time algorithm for this case. Let $SS_\Delta$ be a subdivided star of maximum degree $\Delta$ and central vertex $c$. By a \emph{branch} of $SS_\Delta$, we mean a maximal path starting from the central vertex $c$ but not containing $c$ itself. Branches are denoted by $B_1, \dots, B_\Delta$ and the leaves by $\ell_1,\dots, \ell_\Delta$. The three vertices in branch $B_i$ closest to $c$ are denoted by $v_i$, $u_i$ and $w_i$, respectively (note that $u_i$ and $w_i$ may not exist if $|B_i| \leq 1$ or $|B_i| \leq 2$; in theses cases, one of these three vertices may also be $\ell_i$). We further assume that branches are ordered so that $\lambda(cv_i)\leq \lambda(cv_{i+1})$ for each $i\leq \Delta-1$. We assume that $\lambda(cv_i)=1$ if and only if $i\leq I_1$ where $0\leq I_1\leq \Delta$ and denote $B^1=\bigcup_{i=1}^{I_1}B_i$ and $B^2=\bigcup_{i=I_1+1}^{\Delta}B_i$.
	
	The basic principles of Algorithm \ref{ALGTemporalSubStars1label}  on temporal subdivided star $\mathcal{SS}_\Delta=(SS_\Delta,\lambda)$ are as follows:
	\begin{enumerate}
		\item For each path from $\ell_i$ to $c$, we apply Algorithm \ref{ALGTemporalPaths1label} starting from $\ell_i$ creating temporal resolving set $R_i$ for this path. Denote $(\bigcup_{i=1}^\Delta R_i)\setminus \{c\}$ by $R'$.
		\item Let $ B_c=\bigcup_{c\in R_i} B_i$. If $B_c=\emptyset$, then we are done.
		\item We construct an $O(n^5)$ number of sets $R''$ containing vertices in the vicinity of $c$ such that $R'\cup R''$ is a temporal resolving set, and select the smallest such set.
	\end{enumerate}

	The following theorem shows that Algorithm \ref{ALGTemporalSubStars1label} returns a minimum-size temporal resolving set in polynomial time for a given temporal subdivided star using only values $1$ and $2$ in its $1$-labeling. In order to do this, we show that there is a minimum-size temporal resolving set of the form $R' \cup R''$ as constructed above.

	\begin{algorithm}[th!]
		\caption{Computing in polynomial-time a minimum-size temporal resolving set of a temporal subdivided star $\mathcal{SS}_\Delta=(SS_\Delta,\lambda)$ such that, for any edge $e$, either $\lambda(e)=1$ or $\lambda(e)=2$.}
		\label{ALGTemporalSubStars1label}
		
		\SetKwInOut{KwIn}{Input}
		\SetKwInOut{KwOut}{Output}
		\KwIn{Subdivided star $SS_\Delta$ of degree $\Delta\geq3$ together with  time labeling $\lambda$ for each edge $e\in E(SS_\Delta)$ such that $\lambda(e)\in\{1,2\}$.}
		\KwOut{A minimum-size temporal resolving set $R$ of $\mathcal{SS}_\Delta$.}
		
		For each path from $\ell_i$ to $c$, we create temporal resolving set $R_i$ using \Cref{ALGTemporalPaths1label}.\label{alg2R_i}
		
		Let $R'=(\bigcup_{i=1}^\Delta R_i)\setminus \{c\}$.\label{alg2R'}
		
		Let $ B_c=\bigcup_{c\in R_i} B_i$.\label{alg2B_c}
		
		\If{$B_c=\emptyset$\label{alg2If1}}
		{\Return $R'$.\label{alg2Return1}}
		
		\For{$j\in \{1,2\}$ and $1\leq i\leq \Delta$\label{alg2For1}}
		{
			\If{
				$v_i\in V(B_c)$ and $B_i\in B^j$, for some $i\leq \Delta$, and $R_i\setminus \{c\}$ does not separate $v_i$ from some other vertex in $B_i$ but $v_i\in \rea(R_i\setminus\{c\})$\label{alg2If2}}
			{ add $v_i$ to $Q_j$. \label{alg2Then1}}
		}
		
		\For{$1 \leq i\leq \Delta$\label{alg2For2}} {
			Remove  branch $B_i$ from $B_c$ if $Q_2\cap V(B_i)\neq \emptyset$.\label{alg2Then2}
		}
		
		Let $r=|\{i\mid Q_1\cap V(B_i)\neq \emptyset\}|$.\label{alg2r}
		
		\For{$0\leq j\leq 2$\label{alg2For3}}{
			\For{each vertex set $R''$ such that $R''\subseteq \rea(c)\cap (V(B_c)\cup\{c\})\setminus R'$, $|R''\cap V(B_i)|\leq 1$ for every $B_i\in B_c$, $|R''\cap Q_1|\leq 1$, $|R''\cap\{a\mid \dist(c,a)=2\}|\leq 1$ and $|R''|= |B_c|-1-r+j$\label{alg2For4}}{
				\If{$R'\cup R''$ is a temporal resolving set of $\mathcal{SS}_\Delta$\label{alg2If3}}{
					\Return $R'\cup R''$ \label{alg2Return2}
				}
			}
		}
		
	\end{algorithm}

	\begin{theorem}
		\label{thm:polyalgSubStars1label12}
		Given a subdivided star $SS_\Delta$ of maximum degree $\Delta\geq3$ and a 1-labeling of edges $\lambda$ restricted to values $1$ and $2$,
		\Cref{ALGTemporalSubStars1label} returns a minimum-size temporal resolving set of $\mathcal{SS}_\Delta=(SS_\Delta,\lambda)$ in polynomial-time on the number of vertices.
	\end{theorem}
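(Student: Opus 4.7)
The plan is to follow the structure of the algorithm closely: $R'$ handles each branch independently by applying \Cref{ALGTemporalPaths1label} to the path from the leaf $\ell_i$ to the centre $c$, and $R''$ supplements $R'$ to replace $c$'s resolving role in the branches collected into $B_c$. The key simplification from the restriction $\lambda(e)\in\{1,2\}$ is that, because $\lambda$ is a $1$-labeling, every journey consists of at most two edges---one at time~$1$ followed by one at time~$2$---so $\rea(v)$ is contained in the $2$-neighbourhood of $v$ in the underlying graph. Consequently, any cross-branch interaction passes through $c$ and only concerns vertices at distance at most $2$ from $c$, which makes every separation or reachability issue essentially local.

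For correctness, \Cref{The:TempPath1label} ensures that each $R_i$ optimally resolves the path from $\ell_i$ to $c$. If $B_c=\emptyset$, then no branch required $c$, and since journeys from any branch cannot reach beyond the immediate neighbours of $c$ in another branch, the set $R'$ is already a temporal resolving set of $\mathcal{SS}_\Delta$. Otherwise, for each branch in $B_c$ we must substitute for the missing $c$. The classification into $Q_1$ and $Q_2$ distinguishes which $v_i$ are the actual source of separation failure according to $\lambda(cv_i)$: a $Q_2$-conflict is resolved simply by including in $R''$ any vertex $v_j$ with $\lambda(cv_j)=1$ from another branch, which is why such branches are removed from $B_c$ before the enumeration on Line~\ref{alg2For2}. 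The enumeration of $R''$ on Line~\ref{alg2For4} then picks a substitute vertex in each remaining branch of $B_c$, subject to constraints that prevent redundancy: at most one vertex per branch, at most one vertex from $Q_1$ (since one such vertex simultaneously takes care of every $Q_1$-conflict via a length-$2$ journey through $c$), and at most one vertex at distance~$2$ from $c$ for the same cross-branch reason. A short case analysis on the possible targets of length-$2$ journeys originating at $R''$-vertices shows that $R'\cup R''$ becomes a temporal resolving set as soon as its size reaches $|B_c|-1-r+j$ for the correct $j\in\{0,1,2\}$.

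For optimality, let $R^\star$ be any minimum-size temporal resolving set of $\mathcal{SS}_\Delta$. Applying \Cref{LemPathGreedyness} to each branch (viewed as a path from $\ell_i$ to $c$), the vertices of $R^\star$ inside branch $B_i$ are placed no farther from $\ell_i$ than those of $R_i\setminus\{c\}$, so $R^\star$ must contain at least $|R'|$ vertices accounted for strictly within branches. The remaining vertices must discharge the separation/reachability tasks that $c$ was performing in the branches of $B_c$, and the length-$2$ journey restriction forces each such task to be handled either by a vertex in the offending branch itself or by one of the few cross-branch substitutes captured by $Q_1$ and the distance-$2$ vertices. A counting argument on these tasks yields the exact threshold $|B_c|-1-r+j$ for some $j\in\{0,1,2\}$. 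The main obstacle is the normalization step: proving that any minimum-size $R^\star$ can be transformed, without increase in size, into a set of the form $R'\cup R''$ respecting the structural constraints of the enumeration (at most one vertex per branch, at most one $Q_1$-vertex, at most one distance-$2$ vertex). This requires showing that pairs of extra vertices in the same branch, or multiple $Q_1$-substitutes, are always exchangeable for a single better-placed vertex, which is where the fine-grained analysis of which edges can carry length-$2$ journeys becomes critical.

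The runtime is polynomial: Line~\ref{alg2R_i} runs \Cref{ALGTemporalPaths1label} on each of the at most $\Delta\leq n$ branch paths in linear time; the classification into $Q_1,Q_2$ and the computation of $r$ are linear in $n$; and the enumeration in Line~\ref{alg2For4} visits $O(n^5)$ candidate sets---at most three ``special'' vertices (one per branch of $B_c$, plus the at-most-one $Q_1$ and at-most-one distance-$2$ choices) to place explicitly in $O(n)$ positions each, with the remaining branches receiving forced picks---followed by an $O(n^2)$ verification of the temporal resolving property by direct distance computation. Combined, this yields an overall polynomial-time algorithm, as claimed.
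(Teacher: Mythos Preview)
Your outline follows the paper's approach almost exactly: build $R'$ branch-by-branch via \Cref{ALGTemporalPaths1label}, argue via \Cref{LemPathGreedyness} that any optimum must spend at least $|R_i\setminus\{c\}|$ vertices inside each branch, and then normalise the remaining ``near-$c$'' vertices into the restricted shape enumerated in Line~\ref{alg2For4}. The runtime discussion is also essentially identical to the paper's.

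What is missing is precisely the part you flag as ``the main obstacle'': you assert that the normalisation goes through (and that ``a short case analysis'' and ``a counting argument'' deliver the threshold $|B_c|-1-r+j$) without carrying it out. In the paper this is the bulk of the work---five separate exchange arguments showing that a minimum solution may be assumed to satisfy $R''\subseteq\rea(c)$, $R''\subseteq V(B_c)\cup\{c\}$, $|R''\cap V(B_i)|\le 1$, $|R''\cap Q_1|\le 1$, and $|R''\cap\{a:\dist(c,a)=2\}|\le 1$---and none of them is entirely routine. Your sketch also misidentifies the mechanism in two places. First, $Q_2$-branches are not removed because ``including any $v_j$ with $\lambda(cv_j)=1$ from another branch'' resolves their conflict; they are removed because for such a branch $\rea(c)\cap V(B_i)=\{v_i\}$, so the branch contributes nothing to the size of $R''$ and the conflict is handled by whichever vertex of $R''$ (possibly $c$ itself) happens to reach $v_i$. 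Second, the constraint $|R''\cap Q_1|\le 1$ is not because one $Q_1$-vertex ``takes care of every $Q_1$-conflict''; it is an exchange: if $v_i,v_j\in R''\cap Q_1$ then $(R''\setminus\{v_i\})\cup\{c\}$ still resolves, so one of them is redundant. These are not fatal to the plan, but they show that the informal reasoning you give would not survive being made precise, and the actual exchange arguments still need to be supplied.
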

	
	\begin{proof}
		We consider that the central vertex $c$ is the rightmost vertex and the leaves are the leftmost vertices of their branches whenever reasoning on \Cref{ALGTemporalPaths1label}.
		We first show that Algorithm \ref{ALGTemporalSubStars1label} returns a temporal resolving set, then that the temporal resolving set has minimum possible size, and finally that the algorithm operates in polynomial time.
		
		First of all, by \Cref{The:TempPath1label}, Algorithm \ref{ALGTemporalPaths1label} returns a minimum-size temporal resolving set $R_i$ for a path from $\ell_i$ to $c$ in Step~\ref{alg2R_i}. Denote this path together with its time labeling $\lambda$ by $\mathcal{P}_i$. Let us first consider the set $R'$ which is returned in Step~\ref{alg2Return1}. If we enter the if-clause on Step~\ref{alg2If1}, then we have $B_c=\emptyset$ and $c\not\in \bigcup_{i=1}^\Delta R_i$. Thus, $R'=\bigcup_{i=1}^\Delta R_i$. Since each $R_i$ is a temporal resolving set for path from $\ell_i$ to $c$, each vertex in the substar is reached by some vertex in $R'$. Moreover, each of these sets separates all vertices within the same path $\mathcal{P}_i$. Thus, if two vertices $u\in V(B_i)$ and $v \in V(B_j)$ are not separated by $R'$, then we have $i\neq j$. However, since $c\not\in R'$, vertices $u$ and $v$ are reached by at least two different vertices in $R'$, from two different sets $R_i$ and $R_j$. Let $u\in \rea(r_u)$ for some $r_u\in R_i$. If $v\not \in \rea(r_u)$, then $r_u$ separates $u$ and $v$. If there is a path from $r_u$ to $u$ to $v$, then $r_u$ separates vertices $u$ and $v$. Thus, $r_u$ is on the path from $u$ to $v$. Similarly, we have $r_v\in R_j$ between vertices $v$ and $u$.   
		Thus, neither of $u$ nor $v$ can be vertex $c$. Consequently, also $c\not\in R'$ is on the path from $u$ to $v$. 
		Moreover, since our labeling consists of values $1$ and $2$, vertex $r_v$ cannot reach $u$. We conclude that $u$ and $v$ are separated. Hence, if $R'$ is returned in Step~\ref{alg2Return1}, then it is a temporal resolving set of the subdivided star.
		
		In the for-clause between Steps~\ref{alg2For1} and~\ref{alg2Then1}, we consider each vertex $v\in N(c)$ in some branch $B_i\in B^j$ which is reached by $R_i\setminus\{c\}$ but not separated from some other vertex in $B_i$. Each such vertex is added to set $Q_j$ for $j$ corresponding to $B^j$. Observe that any vertex in $N[c]$ which reaches $v\in Q_j$, also separates it from all other vertices together with $R_i$.
		
		In the for-clause between Steps~\ref{alg2For2} and~\ref{alg2Then2}, every branch which has non-empty intersection with $Q_2$ is removed from $B_c$. Notice that if $v\in Q_2$, then $v\in N(c)$. Hence, if we remove a branch $B_i$ during this step from $B_c$, then we had $\rea(c)\cap V(B_i)=\{v\}$.
		
		In the for-clause between Steps~\ref{alg2For3} and~\ref{alg2Return2}, we construct a temporal resolving set for the substar. Observe that in particular the for-clause can always find, with $j=3$, the set $R''$ which contains $c$ and the first vertex in every branch in $B_c$ which has empty intersection with $ Q_1$. Together with $R'$, this forms a temporal resolving set of the substar (although not always minimum-size). Indeed, recall that $R_i\cup \{c\}$ is a temporal resolving set for $\mathcal{P}_i$. Thus, $(R_i\cup\{v_i\})\setminus\{c\}$ is also a temporal resolving set for $\mathcal{P}_i \setminus \{c\}$. Furthermore, each $v_i$ can only reach the first vertices of other branches in $B^2$ and no vertices in branches in $B^1$. These vertices are either in $R'\cup R''$, in $Q_2$, or in a branch which has temporal resolving set $R_j\setminus\{c\}$. Since we have $c\in R''$, vertices in $Q_j$ for $j\in\{1,2\}$ are reached and separated from other vertices. By these considerations, the only vertex pairs which might not be separated belong to two different branches and do not belong to $Q_j$. Let $v\in V(B_i)$ and $u\in V(B_j)$ be two vertices which are not separated. Note that if $u\not\in N(c)$, then it cannot be reached by any vertex in $V(B_i)$ and vice versa. Thus, $v,u\in N(c)\setminus R''$. However, now $v$ ($u$) is reached by some vertex $r_v$ ($r_u$) in $R'\cap V(B_i)$ ($R'\cap V(B_j)$). Moreover, we have $u\not\in \rea(r_v)$ ($v\not\in \rea(r_u)$). Thus, $u$ and $v$ are separated and $R'\cup R''$ is a temporal resolving set of the substar.\medskip
		
		Let us next show that the returned temporal resolving set has the minimum size. Consider first set $R'$ on Step~\ref{alg2Return1} and assume that it is returned. In this case, we have $B_c=\emptyset$ and thus, $c\not\in \bigcup_{i=1}^\Delta R_i$. Consider path $\mathcal{P}_i-\rea(c)$. Note that for this path, \Cref{ALGTemporalPaths1label} returns a temporal resolving set of cardinality equal to $|R_i|$. Furthermore, by \Cref{The:TempPath1label}, this set has minimum possible cardinality. By \Cref{LemPathGreedyness}, there is no temporal resolving set of cardinality $|R_i|$ for path $\mathcal{P}_i$ that contains vertex $c$. Moreover, we require for each path $\mathcal{P}_i-\rea(c)$ at least $|R_i|$ vertices in a temporal resolving set. Thus, $R'$ has the smallest possible cardinality.
		
		Let us then show that if $R'$ is not returned on Step~\ref{alg2Return1}, then $R'\cup R''$ has the minimum cardinality for a temporal resolving set in $\mathcal{SS}_\Delta$. By our earlier considerations, $R'\cup R''$ is a temporal resolving set for some suitable $R''$. Let us first show that we may assume that $R'$ is a subset of some minimum-size temporal resolving set of $\mathcal{SS}_\Delta$. First of all, in Step~\ref{alg2R_i}, \Cref{ALGTemporalPaths1label} returns a minimum-size temporal resolving set $R_i$ for each path $\mathcal{P}_i$.  Furthermore, for path $\mathcal{P}_i-\rea(c)\setminus (R_i\setminus\{c\})$, \Cref{ALGTemporalPaths1label} returns the temporal resolving set $R_i\setminus\{c\}$. Indeed, $\mathcal{P}_i-\rea(c)\setminus (R_i\setminus\{c\})$ is a path since \Cref{ALGTemporalPaths1label} always picks the last vertex to reach previously considered vertices and we only use labels $1$ and $2$.
		When traversing the path from a leaf to the center, the algorithm does not consider how to separate/reach vertices ahead of it. Let $r_i\in R_i\setminus \{c\}$ be the vertex closest to $c$ in $R_i\setminus \{c\}$. If $c\not\in R_i$, then by \Cref{LemPathGreedyness},
		any temporal resolving set containing a vertex in $\mathcal{P}_i$ closer to $c$ than vertex $r_i$ has at least $|R_i|+1$ vertices. Moreover, if $c\in R_i$, then any temporal resolving set containing a vertex in $\mathcal{P}_i$ closer to $c$ than vertex $r_i$ has at least $|R_i|$ vertices other than $c$. By these considerations, $r_i$ is the vertex closest to center $c$ other than $c$ which can be contained in any temporal resolving set of cardinality $|R_i\setminus\{c\}|$ over all subpaths of $\mathcal{P}_i$ containing leaf $\ell_i$. 
		
		Furthermore, if $R$ is a resolving set of $\mathcal{SS}_\Delta$ and $R_{P_i}=R\cap V(\mathcal{P}_i^r)$ (where $\mathcal{P}_i^r$ is the subpath of $\mathcal{P}_i$ from $\ell_i$ to $r_i$), then if $|R_{P_i}|=|R_i\setminus\{c\}|$, then $R_P=(R\setminus R_{P_i})\cup (R_i\setminus \{c\})$ is also a temporal resolving set of $\mathcal{SS}_\Delta$ and if $|R_{P_i}|>R_i\setminus\{c\}$,  then $R_P'=\{c\}\cup(R\setminus R_{P_i})\cup (R_i\setminus \{c\})$ is also a temporal resolving set of $\mathcal{SS}_\Delta$.  Consider first the case with $|R_{P_i}|=|R_i\setminus\{c\}|$.
		In this case, $R_i$ reaches and separates every vertex in  $\mathcal{P}_i^r$ and $r_i$ is a leaf of $\mathcal{P}_i^r$. Assume on the contrary that $R_P$ is not a temporal resolving set of $\mathcal{SS}_\Delta$. In this case, there are (at least) two vertices in $\rea(r_i)$ which are not separated by $r_i$. Note that one of them is on the right side (call this $u$) of $r_i$ and one is on the left side (call this $v$). Moreover, if $r_i\not\in R$, then we have some $r_i'\in R$ on the left side of $r_i$ which is the rightmost vertex of $R_{P_i}$. If $r_i'$ separates $u$ and $v$, then  $u\not\in \rea(r_i')$.
		In this case, a vertex in $R\setminus R_{P_i}$ reaches $u$ and separates it from $v$.
		Thus, $r_i\in R$. Let $r_{i-1}'$ ($r_{i-1}$) be the first vertex on the left side of $r_i$ in $R$ ($R_i$). By \Cref{LemPathGreedyness}, the vertex $r_{i-1}'$ is not on the right  side of $r_{i-1}$. Thus, $R_{P_i}$ does not separate vertices $u$ and $v$ but set $R\setminus R_{P_i}$ does separate them. Therefore, $R_P$ also separates $u$ and $v$. Thus, $R_P$ is a temporal resolving set of $\mathcal{P}$.   Moreover, we have $|R_P|=|R|$.

		Let us next consider the case with $|R_{P_i}|>|R_i\setminus\{c\}|$. Recall that $R_i$ is a temporal resolving set of $\mathcal{P}_i$. Thus, if $R_P'=R_P\cup\{c\}$ does not separate some vertices, then those vertices belong to different branches of $\mathcal{SS}_\Delta$ and are only reached by $c$. However, in this case they are also not reached by $R_{P_i}$ and thus, not separated by $R$, a contradiction. Thus, $R_P'$ is a temporal resolving set of $\mathcal{SS}_\Delta$ with $|R_P'|\leq |R|$.
		Therefore, we have shown that $R'$ or $R'\cup\{c\}$ is a subset of some minimum-size temporal resolving set of $\mathcal{SS}_\Delta$.

		In the following, we consider $R''$ from Step~\ref{alg2For4}.
		Observe that there are at most $|B_c|-r$ vertices $v_i$ in $N(c)$ which are not reached by any vertex in $R'$. Let us have $|B_c\cap B^1|=c_1$ and $|B_c\cap B^2|=c_2$.    
		Out of these vertices, note that $c$ cannot separate any two vertices $v_i,v_j\in N(c)\cap V(B_c)\cap V(B^h)$ for $h\in\{1,2\}$. Moreover, to reach every vertex $v_i$ in $B_c\cap B^1$, we require either $c$ to be in $R''$ or some vertex from $B_i$ to be in set $R''$. Thus, we have $|R''|\geq |B_c|-r-1$. We have earlier shown that for $|R''|=|B_c|-r+1$, there always exists a temporal resolving set. Thus, $|B_c|-r-1\leq |R''|\leq |B_c|-r+1$. Let us next show that we may assume that $R''\subseteq \rea(c)\cap (V(B_c)\cup\{c\})\setminus R'$. We immediately note, that assuming $R'\cap R''=\emptyset$ does not affect on $R'\cup R''$. Assume first on the contrary, that for every minimum  set $R^*$ such that $R'\cup R^*$ is a temporal resolving set of $\mathcal{SS}_\Delta$, we have $w\in R^*$ and $w\not\in \rea(c)$. Furthermore, assume that among such sets, $R^*$ contains the smallest possible number of vertices outside of $\rea(c)$. 
		Since $R'$ separates vertices of $\mathcal{SS}_\Delta-\rea(c)$, we have a vertex $v\in \rea(w)\cap\rea(c)$. Otherwise, $R'\cup R^*\setminus\{w\}$ would be a temporal resolving set of smaller size. If $\rea(w)\cap \rea(c)\setminus\{c\}=v$, then $R'\cup (R^*\setminus\{w\})\cup\{v\}$ is a temporal resolving set of $\mathcal{SS}_\Delta$, a contradiction. Thus, $u,v\in \rea(w)\cap \rea(c)$. Note that $u$ and $v$ belong to the same branch. Assume that $\dist_t(c,u)=2$ and $\dist_t(c,v)=1$. In this case, $R'\cup (R^*\setminus\{w\})\cup\{u\}$ is a temporal resolving set of $\mathcal{SS}_\Delta$, a contradiction.  Thus, we may assume that $R''\subseteq \rea(c)$. 
		
		Let us next show that we may assume that $R''\subseteq V(B_c)\cup \{c\}$. Suppose next on the contrary that for every minimum-size  set $R^*$ such that $R'\cup R^*$ is a temporal resolving set of $\mathcal{SS}_\Delta$, we have $w\in R^*$ and $w\not\in V(B_c)\cap \{c\}$. Furthermore, assume that among such sets, $R^*$ contains the smallest possible number of vertices outside of $V(B_c)\cap \{c\}$.By earlier arguments, we may assume that $R^*\subseteq \rea(c)$. Hence, $\dist_t(c,w)\in\{1,2\}$. 
        Observe that $R'$ reaches and separates every vertex in the branch $w$ is located since  $w\not\in V(B_c)$. 
  
        Assume next that $\dist_t(c,w)=2$, then the only vertex, which is possibly separated from some other vertex by $w$ but not by $R'\cup (R^*\setminus\{w\})$, is $c$. Thus, $R'\cup (R^*\setminus\{w\})\cup\{c\}$ is a temporal resolving set of $\mathcal{SS}_\Delta$, a contradiction.

        Assume then that $\dist_t(c,w)=1$. Thus, $w\in N(c)$. Now, $\rea(c)\cap\rea(w)=\{c\}\cup\{w\}\cup(N(c)\cap \{u\mid \dist_t(c,u)=2\})\cup(N(w)\cap\{u\mid \dist_t(c,u)=2\})$. Note that vertices in $\{w\}\cup (N(w)\cap\{u\mid \dist_t(c,u)=2\})$ are already separated from other vertices by $R'$. Consider a pair of vertices $u_c,u_w$ which is separated by $w$ but not by $c$ and let $u_c\in\rea(c)\setminus \rea(w)$ and  $v_w\in\rea(w)\cap \rea(c)$. Furthermore, we have $\dist_t(c,v_w)=\dist_t(c,u_c)=2$. Hence, $v_w\in N(c)\cap \{u\mid \dist_t(c,u)=2\}$ and $u_c\in \{u\mid \dist_t(c,u)=2 = \dist(c,u)\})$. 
        Moreover, since the pair $ v_w,u_c$ is not separated by $R'\cup R^*\setminus\{w\}$, there is another a vertex $x\neq w$ in $R'\cup R^*$ which reaches $u_c$ but does not separate $ v_w$ and $u_c$ since $R'\cup R^*$ is a temporal resolving set of $\mathcal{SS}_\Delta$. Let $u_c$ belong to branch $B_k$. Observe that there are exactly two options for $x$ since $\dist_t(x,v_w)=\dist_t(x,u_c)\leq2$: $c$ and the single vertex, say $v_c$, on the path from $c$ to $u_c$. Note that $v_c\not\in R'$ by Lemma \ref{LemPathGreedyness} since Algorithm \ref{ALGTemporalPaths1label} would place $c$ into the temporal resolving set rather than vertex $v_c$. Therefore,  $B_k\in B_c$. Let assume next that $R'\cup R^*\setminus\{w\}$ separates all other vertex pairs except for $u_c,v_w$. However, in this case $\{u_c\}\cup R'\cup R^*\setminus\{w\}$ is a temporal resolving set of $\mathcal{SS}_\Delta$, a contradiction. Hence, we assume that there is also another vertex pair $u_c',v_w'$ which is not separated by $R'\cup R^*\setminus\{w\}$. Note that the same restrictions apply for $u_c'$ and $v_w'$ as we have for $u_c$ and $v_w$. Furthermore, if we cannot choose $u_c'\neq u_c$, then $\{u_c\}\cup R'\cup R^*\setminus\{w\}$ is a temporal resolving set of $\mathcal{SS}_\Delta$. Thus, let us assume that $u_c'\neq u_c$. Since $u_c,u_c'\not\in \rea(w)$ and since $R'\cup R^*$ is a temporal resolving, there is a vertex $z\in R'\cup R^*\setminus\{w\}$ which separates $u_c$ and $u_c'$. Assume without loss of generality that $u_c'\in \rea(z)$. Again, $\{u_c\}\cup R'\cup R^*\setminus\{w\}$ is a temporal resolving set of $\mathcal{SS}_\Delta$.  Therefore,  $R''\subseteq V(B_c)\cup \{c\}$ and hence, $R''\subseteq \rea(c)\cap(V(B_c)\cap \{c\})$.
        
		Let us next show that we may assume that $|R''\cap V(B_i)|\leq1$ for each $i$. Observe first that since $R''\subseteq \rea(c)$, we have $|R''\cap V(B_i)|\leq2$. 
		Suppose to the contrary, that we have $|R''\cap V(B_i)|=2$ for some $i$ and $R''$ has the smallest number of such branches among all minimum-size temporal resolving sets $R'\cup R''$. Thus, we have $v_i,u_i\in R''$, $\lambda(cv_i)=1$ and $\lambda(v_iu_i)=2$. Since $R''\subseteq V(B_c)\cup \{c\}$, we have $B_i\in B_c$. Since $R'\cup R''\setminus\{v_i\}$ is not a temporal resolving set, vertex $v_i$ separates a pair of vertices $v_j,u_k$ in $V(B_c)$ with $\dist_t(c,v_j)=\dist_t(c,u_k)=2$. Note that $R'\cup \{u_i\}$ separates every vertex in $B_i$ since $u_i\not\in R'$.
        Since $v_i$ separates $u_k$ and $v_j$, exactly $v_j\in \rea(v_i)$ and $\dist(c,v_j)=1$, $\dist_t(c,v_j)=2$ and $\dist(c,u_k)=\dist_t(c,u_k)=2$. 
        Since $R'\cup R''$ is a temporal resolving set, there is a vertex in $R'\cup R''$ which reaches $u_k$ and since it does not separate $u_k$ and $v_j$, that vertex is either $c$ or $v_k$. 
    Furthermore, $R'\cup R''\setminus\{v_i\}$ separates $u_k$ from each vertex except $v_j$. Indeed, if we had $v_p$ which is not separated from $u_k$, then $v_p$ and $v_j$ would not be separated by $R'\cup R''$ since $v_i$ does not separate $v_p$ and $v_j$. Similarly, if we had $u_p$ which is not separated from $u_k$, then they are also not separated by $R'\cup R''$. Thus,  $v_j$ and $u_k$  are the only vertices which are not separated by $R'\cup R''$. Furthermore, we may observe that the only vertices which might be in $R'\cup R''$ at temporal distance one from $c$ are $v_i$ and $v_k$ since any other vertex would separate $u_k$ and $v_j$. Consider now set $\{v_j\}\cup R'\cup R''\setminus\{v_i\}$. Since $v_j$ separates $v_j$ and $u_k$ and vertices $u_k$ and either $c$ or $v_k$ together reach every vertex which is reached by $v_i$, the set $\{v_j\}\cup R'\cup R''\setminus\{v_i\}$ is a temporal resolving set, a contradiction.

		Let us next show that we may assume that $|R''\cap Q_1|\leq1$. Recall that $Q_1\subseteq N(c)\cap \{v\mid \dist_t(c,v)=1\}$. Assume on the contrary that we have $v_i,v_j\in R''\cap Q_1$. Then, $R=R'\cup (R''\setminus \{v_i\})\cup\{c\}$ is a temporal resolving set of $\mathcal{SS}_\Delta.$  Indeed, recall that $c$, together with $R'$, separates vertices in $Q_1$ from all other vertices. Furthermore, the only vertex which $v_i$ could separate which is not separated by $c$ or $v_j$ is $u_j$ if $\dist_t(c,u_j)=2$. However, since, by the definition of $Q_1$, the set $R'$ reaches vertex $v_j\in Q_1$, the set $R'$ also reaches $u_j$. Hence, $R'$ separates $u_j$ from other vertices at temporal distance~2 from $c$, and $R$ is a temporal resolving set of $\mathcal{SS}_\Delta$.

		Let us next show that we may assume that $|R''\cap\{a \mid \dist(c,a)=2\} |\leq1$. We assume on the contrary that we have two vertices $u_i,u_j\in R''\cap\{a \mid \dist(c,a)=2\}$. By our assumptions, we have $R''\subseteq \rea(c)$. Hence, $\dist_t(c,u_i)=\dist_t(c,u_j)=2$. We claim that  $R=R'\cup (R''\setminus \{u_i,u_j\})\cup\{v_i,v_j\}$ is a temporal resolving set of $\mathcal{SS}_\Delta$. Indeed, the only vertices in $\rea(c)$ which are reached by $u_i$ and $u_j$ are in the set $\{v_i,v_j,u_i,u_j\}$. Out of these, $v_i$ and $v_j$ separate themselves. Furthermore, $u_i$ is separated from vertices in $\{u\mid \dist(c,u)=\dist_t(c,u)=2\}$ by $v_i$ and from vertices in  $\{v\mid \dist(c,v)=1$ and $\dist_t(c,v)=2\}$ by $v_j$. Similar arguments hold for $u_j$.  Hence, $R$ is a temporal resolving set of $\mathcal{SS}_\Delta$. 
		
		Since we obtain a temporal resolving set in Steps~\ref{alg2For3} to~\ref{alg2Return2} and there is a minimum-size temporal resolving set of the form $R'\cup R''$, we conclude that we find a minimum-size temporal resolving set in Steps~\ref{alg2For3} to~\ref{alg2Return2} (or in Step~\ref{alg2Return1}).
		
		Let us finally show that \Cref{ALGTemporalSubStars1label} ends in polynomial time. Recall that it takes polynomial time on the number of vertices $n$, especially when the time labels are in the set $\{1,2\}$, to check if a given set is a temporal resolving set of a graph. Furthermore, in Step~\ref{alg2R_i}, \Cref{ALGTemporalPaths1label} works in linear-time and we need to apply it at most $n$ times. Steps~\ref{alg2R'} to~\ref{alg2r} clearly operate in polynomial-time. In Steps~\ref{alg2For3} to~\ref{alg2Return2}, we test for a given $j\in \{0,1,2\}$, at most $ \binom{|B_c|+1}{3-j}\cdot r\cdot|B_c|\in O(n^5)$ times if a given set is a temporal resolving set. Hence, \Cref{ALGTemporalSubStars1label} works in polynomial time, completing the proof.
	\end{proof}
	
	\section{Combinatorial results for $p$-periodic 1-labelings}\label{SecPartLabel} 
 
	In this section, we focus on $p$-periodic 1-labelings, \emph{i.e.}, the time labelings where each edge appears once in every interval of $p$ consecutive time-steps.
    Given a temporal graph $\mathcal{G}=(G,\lambda)$ where $\lambda$ is a $p$-periodic 1-labeling, we denote by $M_p(\mathcal{G})$ the temporal resolving number (\emph{i.e.}, the minimum size of a temporal resolving set) of $\mathcal{G}$. Furthermore, if $\lambda(e)=\{i,i+p,i+2p,\ldots\}$, then, by abuse of notation, we denote $\lambda(e)=i$.
	When $p=1$ or $\lambda(e)$ is the same for every edge $e$, those are exactly the usual resolving sets. 
    We bound the minimum size of temporal resolving sets for several graph classes.

	Note that, in this section, reachability is trivially assured (since the time-steps repeat indefinitely and the considered graphs are connected), so to prove that a given set is a temporal resolving set, we only need to prove that it is separating.
	
	\begin{theorem}
		\label{PropkperiodicPath}
		Let $P_n$ be a path on $n$ vertices, $\lambda$ be a $p$-periodic 1-labeling, and $\mathcal{P}=(P_n,\lambda)$. We have $M_p(\mathcal{P})=1$.
	\end{theorem}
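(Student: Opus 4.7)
The plan is to show that a single leaf of the path already forms a temporal resolving set. Since at least one vertex is trivially required, it suffices to exhibit such a set of size one; I would pick $R=\{v_1\}$, where $v_1$ is one of the two leaves of $P_n$.

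The key computation is the temporal distance $d_j := \dist_t(v_1, v_j)$ for each $j \in \{1, \ldots, n\}$. Writing $t_i := \lambda(v_iv_{i+1}) \in \{1, \ldots, p\}$ and setting $d_1 = 0$ by convention, I would argue by induction on $j \geq 2$ that $d_j$ is well-defined and satisfies $d_j > d_{j-1}$. Indeed, any journey from $v_1$ to $v_j$ must first arrive at $v_{j-1}$, so no earlier than $d_{j-1}$, and then traverse the edge $v_{j-1}v_j$ at some strictly later time that is congruent to $t_{j-1}$ modulo $p$. Because $\lambda$ is $p$-periodic, such a time always exists within the next $p$ steps after $d_{j-1}$, and choosing the earliest one minimizes the arrival time at $v_j$.

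Consequently, the sequence $0 = d_1 < d_2 < \cdots < d_n$ is strictly increasing, hence all entries are pairwise distinct, and $v_1$ reaches every vertex. This shows that $\{v_1\}$ is a temporal resolving set, giving $M_p(\mathcal{P}) \leq 1$. Combined with the trivial lower bound $M_p(\mathcal{P}) \geq 1$, this yields the equality.

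I do not anticipate any real obstacle here: the whole argument hinges on the single observation that periodicity guarantees a future occurrence of each edge within at most $p$ time-steps after any given moment, which forces the journey from $v_1$ to be monotonic along the path and its arrival times to be automatically ordered and distinct.
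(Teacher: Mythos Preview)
Your proposal is correct and follows essentially the same approach as the paper: pick a leaf, observe that the temporal distances from it to the successive vertices along the path are strictly increasing (hence pairwise distinct), and conclude. The paper's proof states this in one line without justification, whereas you spell out the inductive step and the role of periodicity in guaranteeing reachability, which is a welcome elaboration rather than a different method.
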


	\begin{proof}
		Let $u_1,\ldots,u_n$ be the vertices of $P_n$, with edges $u_iu_{i+1}$ for $1 \leq i \leq n-1$. Let $R=\{u_1\}$. For $i \in \{2,\ldots,n-1\}$, $u_i$ is reached from $u_1$ strictly before $u_{i+1}$. The same reasoning works for $R=\{u_n\}$. Hence, any of the leaves (clearly)  forms a minimum-size temporal resolving set.
	\end{proof}

    \begin{remark}
        \Cref{PropkperiodicPath} also holds for general $p$-periodic labelings.
    \end{remark}
	
	In particular, the proof of Theorem \ref{PropkperiodicPath} implies that in a temporal tree $\mathcal{T}$ with $p$-periodic $1$-labeling if we have a path from $r$ to $u$ to $v$, then $r$ separates vertices $u$ and $v$. In this case, we say that vertices $u$ and $v$ are \textit{path-separated} (by $r$). In the following two theorems, we introduce combinatorial results for some simple graph classes. Note that the following theorem implies a polynomial-time algorithm by testing each vertex set containing at most two vertices. 
	
	\begin{theorem}
		\label{PropkperiodicCycle} 
		Let $C_n$ be a cycle on $n$ vertices, $\lambda$ be a $p$-periodic 1-labeling, and $\mathcal{C}=(C_n,\lambda)$. We have $1\leq M_p(\mathcal{C})\leq 2$.
	\end{theorem}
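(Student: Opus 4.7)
The lower bound $M_p(\mathcal{C}) \geq 1$ is immediate, since a temporal resolving set is nonempty. The plan focuses on $M_p(\mathcal{C}) \leq 2$, which I show by exhibiting a temporal resolving set of size at most two.

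Label the vertices of $C_n$ in cyclic order as $v_1,\ldots,v_n$ and fix $v_1$ as the first element of the candidate set. Any other vertex $v_k$ admits exactly two possible journeys from $v_1$ along the cycle, one clockwise through $v_2,\ldots,v_k$ and one counterclockwise through $v_n,\ldots,v_k$. Denote the earliest arrival times along each by $T_k^{cw}$ and $T_k^{ccw}$, so that $\dist_t(v_1,v_k)=\min(T_k^{cw},T_k^{ccw})$. Since a journey extended by one more edge must fire at a strictly later time-step, the sequences $T_2^{cw}<T_3^{cw}<\cdots<T_n^{cw}$ and $T_n^{ccw}<T_{n-1}^{ccw}<\cdots<T_2^{ccw}$ are strictly monotone. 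Consequently, $T_k^{cw}-T_k^{ccw}$ is strictly increasing in $k$, so there exists a threshold $m$ such that $v_2,\ldots,v_m$ are clockwise-faster from $v_1$ while $v_{m+1},\ldots,v_n$ are counterclockwise-faster. Two distinct vertices can have equal temporal distance from $v_1$ only if one is clockwise-faster and the other counterclockwise-faster with matching arrival times, and by the strict monotonicity within each group each vertex participates in at most one such coincidence. In summary, the conflicts from $v_1$ form a matching on $V\setminus\{v_1\}$.

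If there are no conflicts, $\{v_1\}$ is already a temporal resolving set. Otherwise, pick a conflict pair $(v_a,v_b)$ with $v_a$ clockwise-faster and $v_b$ counterclockwise-faster from $v_1$, and consider adding $v_a$ as the second vertex (with $v_b$ and the two neighbors of $v_1$ as backup candidates). The new vertex trivially separates itself from every other vertex, and for any remaining conflict $(v_c,v_d)$ of $v_1$ the same monotonicity analysis applied from the perspective of the new vertex shows that $\dist_t$ to $v_c$ and to $v_d$ coincide only under a very constrained configuration (because, again, a conflict from any single vertex is a matching with one clockwise-faster and one counterclockwise-faster endpoint). The plan is to use the positions of $v_c,v_d$ relative to $v_a$, combined with the labels along the cycle, to force separation in all but finitely many borderline configurations, and to handle those by switching the second vertex to $v_b$ or to a neighbor of $v_1$.

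The main obstacle is ruling out the simultaneous failure of every candidate second vertex: one must verify that no $p$-periodic labeling of $C_n$ can make several distinct conflicts from $v_1$ survive under all natural choices of the second vertex at once. I plan to dispatch this via a case analysis combining the matching structure of conflicts, the strict monotonicity of arrival times in each direction from any starting vertex, and the periodicity of $\lambda$, with small-case inspection (short cycles and small periods) used to settle residual configurations. This will yield that $\{v_1,u\}$ is a temporal resolving set for a suitable $u\in\{v_a,v_b,v_2,v_n\}$, establishing $M_p(\mathcal{C})\leq 2$.
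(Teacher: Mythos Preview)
Your proposal has a genuine gap: the crucial step is never carried out. You correctly observe that from any fixed vertex the conflicts form a matching (one clockwise-faster, one counterclockwise-faster endpoint), but the heart of the argument---that some second vertex from your candidate list $\{v_a,v_b,v_2,v_n\}$ breaks \emph{all} remaining conflicts simultaneously---is only announced, not proved. Phrases like ``force separation in all but finitely many borderline configurations'' and ``dispatch this via a case analysis\ldots with small-case inspection'' are not arguments; they are placeholders for arguments. Since you give no bound on the number of conflicts of $v_1$ and no mechanism linking a conflict of $v_1$ to a non-conflict of, say, $v_a$, there is nothing to stop each candidate second vertex from having its own conflicts that happen to coincide with those of $v_1$. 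The periodicity of $\lambda$ is invoked but never used concretely.

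By contrast, the paper avoids the whole case analysis by choosing the two vertices well from the start. It picks an edge $e=uv$ whose label is locally maximal (at least as large as the labels of both adjacent edges), and takes $R=\{u,v\}$. The point of this choice is that any journey from $u$ that crosses $e$ must do so on its very first step; hence for any target $z$, the temporal distance from $u$ either avoids $e$ (so $u$ path-separates $z$ from anything further along that direction) or equals $\dist_t(v,z)+p$. If $x,y$ were unseparated by both $u$ and $v$, then (up to symmetry) the optimal $u$-to-$y$ journey uses $e$ while the optimal $u$-to-$x$ journey does not, giving $\dist_t(v,y)=\dist_t(u,y)-p$; swapping roles gives $\dist_t(u,x)=\dist_t(v,x)-p$. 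Combining with $\dist_t(u,x)=\dist_t(u,y)$ and $\dist_t(v,x)=\dist_t(v,y)$ forces $\dist_t(v,y)=\dist_t(v,y)-2p$, a contradiction. The whole proof is a few lines because the ``locally maximal edge'' choice makes the two directions from $u$ and from $v$ interact in a controlled way---exactly the control your arbitrary choice of $v_1$ lacks.
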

	
	\begin{proof}
		Let $C_n$ be a cycle on $n$ vertices, $\lambda$ be a $p$-periodic 1-labeling, and $\mathcal{C}=(C_n,\lambda)$. Let $e=uv$ be a locally maximally labeled edge of $\mathcal{C}$, \emph{i.e}, an edge with $\lambda(e)$ such that the adjacent edges have labels at most $\lambda(e)$ (such an edge has to exist). We claim that $u$ and $v$ form a resolving set.
		
		Suppose for a contradiction that there are two vertices, $x$ and $y$, not separated by $u$ and $v$. That means that they have the same temporal distance from $u$ and from $v$. 
		
		Let us first consider temporal distances from $u$. By $e$ being locally maximally labeled edge, it must be that precisely one of the paths from $u$ to $x$ and from $u$ to $y$ attaining the minimal temporal distance must go through $e$. Otherwise, $u$ path-separates $x$ and $y$.
		Without loss of generality, let $e$ be on the path from $u$ to $y$. Thus, $\dist_t(v,y)=\dist_t(u,y)-p$. 
		However, by the same reasoning as above, path attaining the temporal distance from $v$ to $x$ must now use edge $e$ and thus, $\dist_t(u,x)=\dist_t(v,x)-p$. Since $\dist_t(u,x)=\dist_t(v,x)$. We have $\dist_t(v,y)=\dist_t(v,x)-2p$, a contradiction.
	\end{proof}

The previous theorem shows how to solve periodic cycles with $1$-labelings. However, the case with periodic $k$-labelings is open for cycles. 
    \begin{problem}
		\label{ProbPeriodicCycle} 
        Does there exist a $p$-periodic time labeling $\lambda$ and some $n$ such that for $\mathcal{C}=(C_n,\lambda)$ we have $M_p(\mathcal{C})\geq3$?
	\end{problem}

 In the following theorem, we give tight bounds for temporal resolving number of temporal complete graphs with $p$-periodic $1$-labeling.
	
	\begin{theorem}
		\label{PropkperiodicComplete}
		Let $K_n$ be a complete graph on $n=b+p^b$ vertices with $b \geq 1$, $\lambda$ be a $p$-periodic 1-labeling, and $\mathcal{K}=(K_n,\lambda)$. We have $b\leq M_p(\mathcal{K})\leq n-1$ and both bounds are tight.
	\end{theorem}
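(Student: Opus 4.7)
The plan is to split the argument into the two direction bounds and their tightness witnesses. For the lower bound $M_p(\mathcal{K}) \geq b$, I would use a counting argument on temporal distance vectors. Since $\mathcal{K}$ is a complete graph under a $p$-periodic $1$-labeling, every direct edge $uv$ has label in $\{1, \ldots, p\}$, so $d_t(u,v) \leq p$ for every pair of distinct vertices. Thus, for any resolving set $R$ with $|R| = a$, every vertex outside $R$ has a distance vector in $\{1, \ldots, p\}^a$, yielding at most $p^a$ distinct vectors. Separation forces $n - a \leq p^a$, i.e.\ $a + p^a \geq n = b + p^b$, and since $x \mapsto x + p^x$ is strictly increasing on positive integers this gives $a \geq b$. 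For the upper bound $M_p(\mathcal{K}) \leq n - 1$, any $(n-1)$-vertex subset $R$ is resolving: each $r \in R$ has a zero in its own coordinate that no other vertex matches, and the single vertex outside $R$ has all positive coordinates so its vector differs from every $r \in R$'s.

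For tightness of the upper bound, I would use the labeling $\lambda(e) = 1$ for every edge. Every pair of distinct vertices then has $d_t = 1$, realized by the direct edge, while no strictly ascending walk of length at least $2$ can arrive at time $1$ (it would need two strictly smaller labels). All vertices are therefore twins, so every temporal resolving set has size at least $n - 1$. For tightness of the lower bound, I would exhibit a labeling with $M_p(\mathcal{K}) = b$ by treating the case $b = 1$ (so $n = p + 1$): fix one vertex $r_1$, set $\lambda(r_1 v_j) = j$ for the $p$ other vertices $v_1, \ldots, v_p$, and label every remaining edge by $p$. Then $d_t(r_1, v_j) = j$ for each $j$, since the direct edge realizes time $j$ and any alternative path $r_1 \to v_i \to \cdots$ must traverse an internal edge of label $p$ and so arrives at time at least $p \geq j$. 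The resulting $p$ distance vectors are distinct, so $\{r_1\}$ is a temporal resolving set of size $1 = b$.

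The main technical obstacle will be the no-shortcut verification in the lower-bound construction: ruling out every strictly ascending multi-hop walk from $r_1$ that lands at some $v_j$ strictly before time $j$. This is handled by observing that any such walk, after its first edge, must traverse an internal edge of label $p$, which pushes the arrival time to at least $p$, dominating any $j \leq p$. A parallel but simpler observation controls the upper-bound tightness argument.
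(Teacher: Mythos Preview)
Your counting argument for the lower bound, the trivial upper bound, and the all-equal-labels witness for upper-bound tightness are all correct and match the paper's approach (your version of the counting is in fact slightly cleaner, making explicit that $d_t(u,v)\in\{1,\ldots,p\}$ and invoking monotonicity of $x\mapsto x+p^x$).

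There is, however, a genuine gap in your tightness argument for the lower bound: you only construct a witness for $b=1$. The theorem asserts tightness for every $b\geq 1$ (i.e., for each $n=b+p^b$ there is a $p$-periodic $1$-labeling of $K_n$ with $M_p=b$), so the $b=1$ case alone does not suffice. Your construction does generalize in the obvious way, and this is exactly what the paper does: take a set $B=\{r_1,\ldots,r_b\}$ of $b$ vertices, assign to each of the remaining $p^b$ vertices $v$ a distinct $b$-tuple $\ell(v)\in\{1,\ldots,p\}^b$, set $\lambda(r_iv)=\ell(v)_i$, and label every other edge (both inside $B$ and among the non-$B$ vertices) by $p$. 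The same ``no shortcut'' reasoning you gave for $b=1$ applies: any multi-hop journey from $r_i$ must, after its first step, traverse an edge of label $p$, so it arrives no earlier than time $p\geq \ell(v)_i$; hence $d_t(r_i,v)=\ell(v)_i$ and the distance vectors from $B$ are exactly the distinct tuples $\ell(v)$. You should state and verify this general construction rather than only the $b=1$ instance.
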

	
	\begin{proof}
		The upper bound is trivial. To prove its tightness, consider $\mathcal{K}=(K_n,\lambda)$ with $\lambda$ assigning the same time label to all edges. All pairs of vertices in the graph are twins and therefore, we have to take at least one of the vertices in all such pairs. This results in taking $n-1$ vertices.
		
		Let us prove the lower bound. For a contradiction, suppose there would be less than $b$ vertices in a temporal resolving set, say $b'<b$. There are still more than $p^b$ vertices to separate but there are just $p^{b'}$ possible distance vectors to the vertices outside of our chosen set, less than $p^b$. This means that some vertices have to share a distance vector and thus, they are not separated, a contradiction. 
		
		We shall now construct a complete graph $K_n$ with a $p$-periodic 1-labeling which attains the lower bound. To this end, take a subset $B$ of $b$ vertices in a fixed order. Then, give every vertex $v\in V(K_n)\setminus B$ a unique $p$-ary tuple $\ell(v)$ of length $b$ containing values from $1$ to $p$. We label an edge between $i$-th element of $B$ and $v\in V(K_n)\setminus B$ by $j$ if the $i$-th position of $\ell(v)$ is $j$. 
		The remaining edges of the graph, \emph{i.e.}, edges running between the vertices of $B$ and between the vertices outside of $B$ will get label $p$. Clearly, the vertices of $B$ now form a resolving set since the constructed $b$-tuples are precisely the vectors of distances.
	\end{proof}
	
	The following lemma will help us to simplify the remaining results on trees, as we will be able to consider only temporal resolving sets composed of leaves.
	
	\begin{lemma}
		\label{PropkperiodicLeaves}
		Let $T$ be a tree, $\lambda$ be a $p$-periodic 1-labeling, $\mathcal{T}=(T,\lambda)$ with $M_p(\mathcal{T}) \geq 2$. There is a temporal resolving set of $\mathcal{T}$ of size $M_p(\mathcal{T})$ containing only leaves of~$T$.
	\end{lemma}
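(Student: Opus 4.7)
The plan is a standard exchange argument: take a minimum-size temporal resolving set $R$ of $\mathcal{T}$, and show that whenever $R$ contains a non-leaf vertex $v$, we can swap $v$ for a suitable leaf $\ell$ of $T$ to obtain another minimum-size temporal resolving set $R'=(R\setminus\{v\})\cup\{\ell\}$. Iterating this exchange terminates (the number of non-leaf vertices in $R$ strictly decreases at each step) and produces a minimum-size temporal resolving set consisting only of leaves.

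The engine is the path-separation remark made directly after \Cref{PropkperiodicPath}: in a tree with a $p$-periodic $1$-labeling, any vertex $r$ separates every pair $(x,y)$ such that one vertex lies on the unique path from $r$ to the other. In particular, the only pairs $r$ may fail to separate are those lying in two different components of $T-r$. For the exchange step, I fix an internal $v\in R$ and consider the components $T_1,\dots,T_d$ of $T-v$ (with $d\ge 2$). I choose a component $T_i$ that minimizes interference with the other elements of $R$, for instance one containing the fewest vertices of $R\setminus\{v\}$; the hypothesis $M_p(\mathcal{T})\ge 2$ ensures some other $r\in R$ exists and makes this choice nontrivial. Take $\ell$ to be a leaf of $T$ lying in $T_i$ at the end of a longest path starting at $v$'s neighbor in $T_i$.

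It then remains to show that $R'$ separates every pair. Pairs already separated by $R\setminus\{v\}$ are unaffected; the only delicate pairs are those for which $v$ was the unique separator in $R$, and by path-separation these must lie in two distinct components of $T-v$. If one of the two vertices belongs to $T_i$, then path-separation from $\ell$ settles the pair, since the journey from $\ell$ to any vertex outside $T_i$ must traverse the vertex in $T_i$ and then $v$. If both vertices lie outside $T_i$, the journeys from $\ell$ pass through $v$ and continue as journeys starting at $v$ at time $\dist_t(\ell,v)$, so the question reduces to whether this time-shifted $v$ still separates the pair.

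The main obstacle is precisely this last case. Under $p$-periodicity, two edge-label sequences distinguishable starting at time $0$ can collide when started at a different time (e.g.\ with $p=3$, sequences $(1,2)$ and $(3,2)$ both reach their endpoints at time $5$ when started at time $1$, though they differ starting at $0$). Handling it should require exploiting the already existing members of $R\setminus\{v\}$ in the other components to cover such shifted collisions, or a refined choice of $\ell$ (for instance a leaf adjacent to $v$, or the farthest leaf so that the labels on the path from $\ell$ absorb the shift). This case analysis, combined with the path-separation property, is what makes the exchange go through and yields the lemma.
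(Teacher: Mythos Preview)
Your plan is the same exchange framework as the paper, and you correctly isolate the only real difficulty: pairs $a,b$ that were separated by $v$ and lie in two components of $T-v$ neither of which is the one containing your new leaf $\ell$. But you do not actually resolve this case; you say it ``should require'' exploiting other members of $R$ or a refined choice of $\ell$. That is exactly where the proof lives, and your proposed choices (a component with the fewest $R$-vertices, a leaf at the end of a longest path) do not help. The obstruction you yourself describe---that a time shift at $v$ can merge two previously distinct arrival times---is not controlled by path length or by how many $R$-vertices sit in the chosen component.

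The paper's fix is a specific, label-driven choice. Root at $v$, let $m$ be the minimum over all branches $B$ of the label of the edge $vB$, and call $B^m$ the set of branches attaining this minimum. Choose $\ell$ so that after the swap every branch in $B^m$ contains a vertex of $R'$ (if some $B\in B^m$ was empty of $R$, put $\ell$ there; otherwise put $\ell$ in any $R$-free branch, or anywhere). The point of this choice is that a vertex $x\in R'$ lying in a branch of $B^m$ reaches $v$ at a time congruent to $m\pmod p$, and since $m$ is minimal, for any branch $B$ with $\lambda(B)>m$ one gets the clean additive identity $\dist_t(x,a)=\dist_t(x,v)+\dist_t(v,a)-m$. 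Hence for $a,b$ both in branches outside $B^m$, $x$ separates them exactly when $v$ did, with no shift collision. The remaining cases (one or both of $a,b$ in a $B^m$-branch) still require a careful sub-case analysis in the paper, but the minimum-label choice is the idea that makes the exchange go through; your sketch is missing it.
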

	
	\begin{proof}
		Let $T$ be a tree, $\lambda$ a $p$-periodic 1-labeling, and $\mathcal{T}=(T,\lambda)$.
		Suppose we have a minimum temporal resolving set $R$ of size at least two with the minimum number of non-leaves.
		
		If every leaf is in the resolving set, then every two non-leaf vertices are path-separated by a leaf and thus there can be no non-leaves in $R$, as otherwise we can construct a smaller temporal resolving set. Hence, we may assume that not all leaves do belong to $R$.
		
		Assume next that there is at least one non-leaf vertex $v \in R$. We root the tree in $v$. Our aim is to find a suitable leaf $\ell \not\in R$, which will be exchanged with $v$ in order to get a same-size temporal resolving set with a greater number of leaves.
		Let us denote, for a branch $B$, the label of the edge between $v$ and the vertex adjacent to $v$ in $V(B)$ by $\lambda(B)$. Let $m=\min\{\lambda(B)\mid B\text{ is a branch}\}$. Denote by $B^m$ the set of branches $B$ which have $\lambda(B)=m$. If there is a  branch $B\in B^m$ such that $R\cap V(B)=\emptyset$, then we choose $\ell$ as a leaf of $B$. Otherwise, if there exists a branch $B'$ such that $R\cap V(B')=\emptyset$, then we select $\ell$ as a leaf of $B'$ and if such $B'$ does not exist, then we select $\ell$ as an arbitrary leaf not in $R$. Let $R'=(R\cup\{\ell\})\setminus\{v\}$. Notice that since $R$ is a temporal resolving set of $\mathcal{T}$, after this process we have $R'\cap V(B)\neq \emptyset$ for each $B\in B^m$. In the following, for $x,y \in V(T)$, we always assume that $x\in V(B_x)\cap R'$ where $B_x$ is the branch in which $x$ resides, $B_x\in B^m$, and $y\in R\setminus V(B_x)$.
		
		We now consider different cases for a pair of vertices $a,b\in V(T)$ separated by $v$ (which is in $R$). In particular, we show that $R'$ separates them as well.\medskip
		
		\noindent \emph{\textbf{Case 1}. $a\in V(B),b \in V(B')$ for any $B,B'\not\in B^m$}: Note that we may have $B=B'$. Recall that $x\in V(B_x)\cap R'$ and $B_x\in B^m$. We have $\dist_t(x,a)=\dist_t(x,v)+\dist_t(v,a)-m$ and $\dist_t(x,b)=\dist_t(x,v)+\dist_t(v,b)-m$. Since $v$ separates $a$ and $b$, we have $\dist_t(v,a)\neq \dist_t(v,b)$. Hence, $\dist_t(x,a)\neq \dist_t(x,b)$. \medskip
		
		\noindent \emph{\textbf{Case 2}. $a,b \in V(B)$ for $B\in B^m$}: Let $y\in V(B_y)\cap R'$ and $B_y\neq B$. We have $\dist_t(y,a)=\dist_t(y,v)+\dist_t(v,a)+(p-\lambda(B_y))$ and $\dist_t(y,b)=\dist_t(y,v)+\dist_t(v,b)+(p-\lambda(B_y))$ since 
		$\lambda(B)\leq \lambda(B_y)$. Since $v$ separates $a$ and $b$, we have $\dist_t(v,a)\neq \dist_t(v,b)$. Hence, $\dist_t(y,a)\neq \dist_t(y,b)$.\medskip

		\noindent \emph{\textbf{Case 3}. $a \in V(B), b \not\in V(B)$ for $B\in B^m$}: 
		Since $B\in B^m$, we have $V(B)\cap R'\neq \emptyset$.
		Assume without loss of generality, that $B=B_x$. Recall that $x\in V(B_x)\cap R'$.
		Let $c$ be the last vertex on the common paths from $x$ to $a$ and $x$ to $b$ (possibly $c=x$). We have $c\in V(B_x)$. Note that if $c=a$, then $x$ path-separates $a$ and $b$. Let us denote by $\lambda(a)$ ($\lambda(b)$) the time label of the first edge on the path from $c$ to $a$ (to $b$). Furthermore, denote by $\lambda(X)$ the label of the last edge on the path from $x$ to $c$. We let $\lambda(X)=0$ if $ x=c$.
		
		When we have $\lambda(X)<\min\{\lambda(a),\lambda(b)\}$ or $\lambda(X)\geq\max\{\lambda(a),\lambda(b)\}$, vertex $x$ separates $a$ and $b$ if and only if $c$ separates $a$ and $b$. If $\lambda(a)\leq \lambda(X)<\lambda(b)$, then vertex $x$ separates vertices $a$ and $b$ if and only if $\dist_t(c,b)\neq \dist_t(c,a)+p$.
		
		Similarly, if $\lambda(b)\leq \lambda(X)<\lambda(a)$, then vertex $x$ separates vertices $a$ and $b$ if and only if $\dist_t(c,a)\neq \dist_t(c,b)+p$. Moreover, in all three subcases
		we have $\dist_t(c,a)\leq \dist_t(v,a)$ and $\dist_t(c,b)\geq \dist_t(v,b)$. Observe that if $ x$ does not separate vertices $a$ and $b$, then the time label of the last edge on the paths from $v$ to $a$ and from $v$ to $b$ is the same.
		
		We conclude that $\dist_t(v,a)=\dist_t(v,b)+h\cdot p$ for some integer $h$. We denote by $B_b$ the branch in which $b$ is located. We shall now prove a crucial claim, saying that in all the cases, $h$ is a positive integer.
		
		\begin{claim}
			In all four aforementioned cases, if $x$ does not separate $a$ and $b$, then, we have $h\geq1$.
		\end{claim}
		\begin{proof} We shall divide the proof according to the cases which might occur.\medskip
			
			\noindent \emph{\textbf{Subcase 3.a}. $\lambda(X)<\min\{\lambda(a),\lambda(b)\}$ or $\lambda(X)\geq\max\{\lambda(a),\lambda(b)\}$}: 
			In this case, $x$ separates $a$ and $b$ if and only if $c$ separates $a$ and $b$. We have $\dist_t(c,a)\leq \dist_t(v,a)$ and $\dist_t(c,b)\geq \dist_t(v,b)$  and at least one of these two inequalities is strict (otherwise $c$ separates $a$ and $b$). Hence, if $c$ does not separate $a$ and $b$, then $\dist_t(v,a)=\dist_t(v,b)+h\cdot p$ for some positive integer $h$. Indeed, the time label of the last edge on the paths from $v$ to $a$ and from $v$ to $b$ is identical since $c$ does not separate these vertices.
			
			\medskip \noindent \emph{\textbf{Subcase 3.b}. $\lambda(b)\leq \lambda(X)<\lambda(a)$}: 
			In this case, $x$ separates vertices $a$ and $b$ if and only if $\dist_t(c,a)\neq \dist_t(c,b)+p$. Hence, we assume that $\dist_t(c,a)= \dist_t(c,b)+p$. We have 
			$\dist_t(v,a)=\dist_t(v,b)+h\cdot p$ 
			for some integer $h\neq 0$. Recall that we have  $\dist_t(c,b)\geq \dist_t(v,b)$ and $\dist_t(c,b)+p=\dist_t(c,a)\leq \dist_t(v,a)$. Hence, $\dist_t(v,b)+p\leq \dist_t(c,a)\leq \dist_t(v,a)$. Thus, $h\geq2$.
			
			\medskip \noindent \emph{\textbf{Subcase 3.c}. $\lambda(a)\leq \lambda(X)<\lambda(b)$}: 
			In this case, $x$ separates vertices $a$ and $b$ if and only if $\dist_t(c,b)\neq \dist_t(c,a)+p$. Assume that this is not the case and  $\dist_t(c,b)= \dist_t(c,a)+p$.  Recall that we have $\dist_t(v,a)=\dist_t(v,b)+h\cdot p$ for some integer $h\neq 0$. We have $\dist_t(v,a)=\dist_t(v,c)+\dist_t(c,a)+(p-\lambda(b))$ because $\lambda(b)>\lambda(a)$. Since $\dist_t(v,c)\geq\lambda(b)$, we have $\dist_t(v,a)\geq \dist_t(c,a)+p$.      
			Recall that, we have  $\dist_t(c,b)\geq \dist_t(v,b)$. Further, 
			$$\dist_t(v,b)+h\cdot p=\dist_t(v,a)\geq \dist_t(c,a)+p=\dist_t(c,b)\geq\dist_t(v,b).$$  Finally, since $\dist_t(v,a)\neq \dist_t(v,b)$, we have $h\geq1$.
			
			With this, we have proved the claim for all the subcases.
		\end{proof}
		
		We next proceed with the proof of Case 3 together with the assumption $\dist_t(v,a)=\dist_t(v,b)+h\cdot p$ for a positive integer $h$.
		
		Consider now a vertex $r\in R'\cap V(B_r)$ for $B_r\not\in B_x\cup B_b$ (if such a $r$ exists). We have \begin{align*}
			\dist_t(r,a)&=\dist_t(r,v)+\dist_t(v,a)+(p-\lambda(B_r))\\
			&=\dist_t(r,v)+\dist_t(v,b)+(h+1)\cdot p-\lambda(B_r)\\
			&\geq \dist_t(r ,v)+\dist_t(v,b)+ 2p-\lambda(B_r ).
		\end{align*}
		Moreover, if $\lambda(B_b)\leq\lambda(B_r )$, then $\dist_t(r ,b)=\dist_t(r ,v)+\dist_t(v,b)+(p-\lambda(B_r ))<\dist_t(r ,a)$. Furthermore, if $\lambda(B_b)>\lambda(B_r )$, then $\dist_t(r ,b)=\dist_t(r ,v)+\dist_t(v,b)-\lambda(B_r ) < \dist_t(r ,a)$. Hence, $r$ separates $a$ and $b$ if $r$ exists.
		
		If we have a vertex $y\in V(B_b)\cap R'$, then we denote by $c'$ the last common vertex on the path from $y$ to $a$ and from $y$ to $b$. Since $B_x\in B^m$, we have \begin{equation}\label{eqDistc'a}
			\dist_t(c',a)=\dist_t(v,a)+\dist_t(c',v)+(p-\lambda(B_b))\geq \dist_t(v,a)+p.
		\end{equation}  We denote by $\lambda(a')$ and $\lambda(b')$ the time label of the first edge on the path from $c'$ to $a$ and from $c'$ to $b$, respectively. Furthermore, denote by $\lambda(Y)$ the label of the last edge on the path from $y$ to $c'$. We let $\lambda(Y)=0$ if $ y=c'$. Similarly to $x$, when we have $\lambda(Y)<\min\{\lambda(a'),\lambda(b')\}$ or $\lambda(Y)\geq\max\{\lambda(a'),\lambda(b')\}$, vertex $y$ separates $a$ and $b$ if and only if $c'$ separates $a$ and $b$. If $\lambda(a')\leq \lambda(Y)<\lambda(b')$, then vertex $y$ separates vertices $a$ and $b$ if and only if $\dist_t(c',b)\neq \dist_t(c',a)+p$. Similarly, if $\lambda(b')\leq \lambda(Y)<\lambda(a')$, then vertex $y$ separates vertices $a$ and $b$ if and only if $\dist_t(c',a)\neq \dist_t(c',b)+p$. In particular, if $\dist_t(c',a)>\dist_t(c',b)+p$, then $y$ separates $a$ and $b$.

		We have $$\dist_t(c',b)\leq\dist_t(v,b)\leq  \dist_t(v,a)-p\leq \dist_t(c',a)-2p.$$ The second inequality is due to Claim and the last inequality is due to Equation (\ref{eqDistc'a}). 
		Hence, $\dist_t(c',a)>\dist_t(c',b)+p$. Therefore, $y$ separates $a$ and $b$ for every ordering of $\lambda(a'),\lambda(b')$ and $\lambda(Y)$.

		\medskip
		\noindent \emph{\textbf{Case 4}. $a \in V(B), b =v$}: Let $y\in R'\setminus V(B)$. In this case, $y$ simply path-separates $a$ and $b$.\medskip
		
		Therefore, $R'$ separates all vertices which are separated by $R$ in all cases. Since $|R|=|R'|$, this contradicts the assumption that $R$ has the minimum possible number of non-leaves. Hence, the statement follows.
	\end{proof}

        We note that it is unclear if the previous lemma also holds for temporal trees with periodic $k$-labeling.
        
        \begin{problem}
		\label{ProbPeriodicLeaves} 
        Does Lemma \ref{PropkperiodicLeaves} hold for temporal trees with periodic $k$-labeling for $k\geq2$?
	\end{problem}
	
	\Cref{PropkperiodicLeaves} gives us an FPT algorithm for \temporalMD in trees with respect to the number of leaves. It also allows us to prove a brute force-like polynomial-time algorithm for temporal subdivided stars with a $p$-periodic 1-labeling for fixed $p$. This shows that \temporalMD is in XP with respect to the period of the time labeling in this context.
	\begin{theorem}\label{ThmSubStarPol}
		\temporalMD is polynomial-time solvable in temporal subdivided stars with a $p$-periodic labeling for a fixed constant $p$.  
	\end{theorem}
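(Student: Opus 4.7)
The plan is to leverage Lemma \ref{PropkperiodicLeaves} together with the periodic regularity of the labeling to reduce the search for a minimum-size temporal resolving set to a polynomially-sized enumeration. First, I would dispose of the edge case $M_p(\mathcal{SS}_\Delta) = 1$ by checking each vertex individually in polynomial time. In the remaining case, Lemma \ref{PropkperiodicLeaves} allows restricting attention to temporal resolving sets composed solely of leaves. I would then define an equivalence relation on branches: $B_i \sim B_j$ iff they have the same length and the same sequence of edge labels read outward from the central vertex $c$. Two equivalent branches are interchangeable in a strong sense: for every vertex $w$ outside their union, the $h$-th vertex of $B_i$ and the $h$-th vertex of $B_j$ have identical temporal distance vectors to and from $w$. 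Consequently, every temporal resolving set must contain at least $|C|-1$ leaves from each equivalence class $C$, and by symmetry the particular choice of which leaves are kept inside $C$ does not matter.

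The problem therefore reduces to choosing, for each equivalence class $C$, whether to include $|C|-1$ leaves (making $C$ \emph{partial}) or all $|C|$ leaves (making $C$ \emph{full}). Writing $F$ for the set of full classes, the candidate set $R_F$ has cardinality $L - m + |F|$, where $L$ is the total number of leaves and $m$ is the number of equivalence classes; minimizing $|R_F|$ is equivalent to minimizing $|F|$ subject to $R_F$ being a temporal resolving set.

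The heart of the proof, and its main obstacle, is to bound by a function $g(p)$ depending only on $p$ the value $|F|$ required for some optimum $R_F$. The intuition I would pursue is that, once $R_F$ is fixed, any remaining unseparated pair must involve the omitted representative of some partial class, and the \emph{external behaviour} of every leaf (i.e., its temporal distances to vertices outside its own branch) is entirely governed by that leaf's arrival time at $c$ modulo $p$; this produces only $p$ distinct behaviour types, so promoting at most $g(p)$ suitably chosen classes from partial to full should suffice to break every outstanding tie. Making this rigorous will require a careful case analysis, exhibiting for each possible type of conflict between omitted representatives (and between omitted representatives and internal branch vertices) a local repair that promotes only a bounded-in-$p$ number of classes. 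Granting the bound, the algorithm then enumerates all subsets $F$ of at most $g(p)$ classes among the $O(n)$ equivalence classes, yielding $O(n^{g(p)})$ candidate sets, each verified to be a temporal resolving set in polynomial time; this gives a polynomial-time algorithm for every fixed $p$, as required.
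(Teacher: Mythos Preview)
Your approach has a genuine gap: the claimed bound $|F|\le g(p)$ on the number of full equivalence classes is false. Consider a subdivided star with $\ell$ branches of pairwise distinct lengths, all of whose edges carry the same label. Then all branches lie in distinct equivalence classes (so $m=\ell$ and every class has size~1), yet the $\ell$ neighbours of $c$ are mutually indistinguishable from any vertex outside their own branch, forcing at least $\ell-1$ leaves into any temporal resolving set. In your notation this means $|F|\ge \ell-1$, which cannot be bounded by any function of $p$ alone. Your heuristic that leaves have only $p$ ``external behaviour types'' is correct, but it does not limit how many leaves of a given type are needed; in the example above all leaves share the same type and still $\ell-1$ of them are required.

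The paper sidesteps the whole equivalence-class machinery with a much simpler observation. Partitioning branches only by the label of the edge incident with $c$ gives at most $p$ parts, and within each part of size $b$ the first vertices of the branches are pairwise indistinguishable from outside, so any resolving set must contain at least $b-1$ leaves from that part. Summing yields $|R|\ge \ell-p$ for every temporal resolving set $R$. Hence it suffices to enumerate all leaf subsets of sizes $\ell-p,\ell-p+1,\ldots,\ell$; there are $\sum_{i=0}^{p}\binom{\ell}{p-i}\in O(n^{p})$ such subsets, each testable in polynomial time, giving the desired polynomial-time algorithm for fixed $p$. You can repair your argument by dropping the attempt to bound $|F|$ and instead using this direct lower bound on $|R|$.
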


	\begin{proof}
		Let $S$ be a subdivided star with central vertex $c$ and $\ell$ leaves, $\lambda$ be a $p$-periodic 1-labeling, and $\mathcal{S}=(S,\lambda)$. By \Cref{PropkperiodicLeaves}, there exists a minimum-size temporal resolving set in $\mathcal{S}$ which contains only leaves of $S$, if $M_p(\mathcal S)>1$.
		
		Note that if some branches, say $b$ of them, share the same time label on their edges incident with $c$, then, to separate vertices in these branches, we need to select at least $b-1$ leaves in a temporal resolving set. Since there are at most $p$ distinct labels, we have, for any temporal resolving set $R$ of $\mathcal{S}$, $|R|\geq \ell-p$. Furthermore, checking whether a given vertex set is a temporal resolving set of $\mathcal{S}$ can be done in polynomial time.
		
		If $n$ is the order of $S$, then there are $\binom{\ell}{\ell-p}=\binom{\ell}{p}\in O(n^p)$ ways to select a vertex set containing exactly $\ell-p$ leaves. Since $p$ is a fixed constant, we can check all these sets in polynomial time. If any of them is a temporal resolving set, then we found a minimum-size temporal resolving set of $\mathcal{S}$. Otherwise, we iterate the process by examining all vertex sets of size $\ell-p+1$, $\ell-p+2$, $\ldots$, until we find a temporal resolving set. The process will eventually stop as the set containing all leaves of $S$ is a temporal resolving set of $\mathcal{S}$. Thus, we need to check at most $\sum_{i=0}^p\binom{\ell}{p-i}$ vertex sets, that is, we need to do $O(n^p)$ polynomial-time operations.
	\end{proof}
	
	We end this section with two combinatorial results for other two subclasses of temporal trees: subdivided stars and complete binary trees.
	
	\begin{theorem}
		\label{PropkperiodicStars}
		Let $S$ be a subdivided star on $\ell\geq2$ leaves, $\lambda$ be a $p$-periodic 1-labeling, and $\mathcal{S}=(S,\lambda)$. We have $\max(1,\ell-p)\leq M_p(\mathcal{S})\leq \ell-1$ and both bounds are tight.
	\end{theorem}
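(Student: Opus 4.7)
The plan is to split the proof into the upper bound, the lower bound, and two tight constructions.

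For the upper bound $M_p(\mathcal{S}) \leq \ell - 1$, I take $R$ to consist of any $\ell - 1$ leaves, omitting only one leaf, say the leaf of some branch $B_1$. Every pair of vertices of $\mathcal{S}$ is then separated by $R$: either some chosen leaf lies at the end of a path through both, giving path-separation as in the proof of \Cref{PropkperiodicPath}; or both vertices lie in $B_1$, in which case a leaf of any other branch reaches them through $c$ and through the shallower one first, again path-separating them.

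For the lower bound $M_p(\mathcal{S}) \geq \max(1, \ell - p)$, the estimate $\geq 1$ is immediate. For $\geq \ell - p$, I invoke \Cref{PropkperiodicLeaves} to restrict attention to resolving sets composed of leaves only, handling the case $M_p(\mathcal{S}) = 1$ (which forces $\ell \leq p+1$) separately. The $\ell$ branches of $S$ partition into at most $p$ groups according to the time-label of their edge incident to $c$. For two branches $B_i, B_j$ in a common group, every journey from an external vertex $w$ to $v_i$ or $v_j$ must cross $c$ and leave via edges of equal label, yielding $\dist_t(w, v_i) = \dist_t(w, v_j)$; hence within each group of $b$ branches, at least $b - 1$ of its branches must contribute a leaf to $R$. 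Summing over the at most $p$ nonempty groups gives $|R| \geq \ell - p$.

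Tightness is witnessed by two explicit constructions. For the upper bound, I use a subdivided star with $\ell$ identical branches all of whose edges carry a common time-label: corresponding depth-$d$ vertices across distinct branches are then external twins, forcing $\ell - 1$ leaves into every temporal resolving set. For the lower bound I split on $\ell \leq p$ versus $\ell > p$. If $\ell \leq p$, a (non-subdivided) star with $\ell$ pairwise distinct edge-labels at $c$ admits any single leaf as a resolving set, since the temporal distances to all remaining vertices are pairwise distinct. If $\ell > p$, a star whose edge labels at $c$ realize each of the $p$ values at least once admits the resolving set obtained by keeping all leaves except one representative per label class; its size is $\ell - p$ and a short periodic-arithmetic computation from a leaf of a label class of size $\geq 2$ shows that the omitted representatives are pairwise distinguished, while all remaining separations are immediate.

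The main obstacle will be the external-twin claim underpinning the lower bound: one must check formally that no external observer can distinguish $v_i$ from $v_j$ when $B_i$ and $B_j$ are $c$-equivalent, which relies on $S$ being a tree (so there are no alternative routes into the branches) together with the $p$-periodicity of $\lambda$ forcing every visit to $c$ to depart through an edge whose label determines the arrival time identically. The upfront reduction to leaf-only resolving sets via \Cref{PropkperiodicLeaves} is what keeps the argument tractable by eliminating the need to handle internal-vertex members of $R$.
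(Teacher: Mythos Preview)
Your proposal is correct and follows essentially the same approach as the paper: the upper bound via path-separation with any $\ell-1$ leaves, the lower bound via \Cref{PropkperiodicLeaves} together with pigeonhole on the labels of the edges incident to $c$, and the upper-bound tightness via the all-equal-labels construction all match the paper's argument almost verbatim.

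The one place you diverge is the tightness of the lower bound. The paper takes a star whose first $p$ edges carry pairwise distinct labels and whose remaining $\ell-p$ edges all carry label $1$, then points to the resulting $\ell-p+1$ mutual twins; this implicitly assumes $\ell>p$ and, as written, argues the redundant direction $M_p\geq\ell-p$ rather than exhibiting a resolving set of size $\ell-p$. Your treatment is more careful: you explicitly split on $\ell\leq p$ (where $\max(1,\ell-p)=1$ and a star with pairwise distinct labels is resolved by a single leaf) versus $\ell>p$ (where you realise all $p$ labels at $c$, omit one leaf per label class, and verify by a direct periodic-distance computation that the remaining $\ell-p$ leaves form a resolving set). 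This buys you a cleaner coverage of the full range of $\ell$ and an explicit witness in the correct direction; the paper's construction, once one also checks that the $\ell-p$ chosen twins do resolve the star, yields the same conclusion.
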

	
	\begin{proof}
		Suppose that we have a subdivided star $\mathcal{S}$ with a $p$-periodic 1-labeling. Denote by $c$ the central vertex of $S$. By \Cref{PropkperiodicLeaves}, if $M_p(\mathcal{S}) \geq 2$, then there exists a minimum-size temporal resolving set of $S$ containing only leaves. Again, as in the proof of \Cref{ThmSubStarPol}, one can argue by a simple application of the pigeonhole principle applied to the distinct labels of edges incident with $c$ that at least $\ell - p$ leaves have to be chosen in order to separate vertices in $N(c)$. For $\ell - p < 1$, a trivial bound saying that at least one vertex has to be chosen is clearly better.
		
		The lower bound is tight, as we can take a star on $\ell$ leaves such that first $p$ edges will have different labels and the remaining $\ell - p$ leaves will all share the same label, say $1$. As there are $\ell - p + 1$ vertices being mutually twins, any minimum-size temporal resolving set has to be of size at least $\ell-p$.
		
		For the upper bound, we show that taking any $\ell - 1$ leaves is enough to form a temporal resolving set $R$, taking \Cref{PropkperiodicLeaves} into account. Indeed, a leaf $\ell\in R$ path-separates every vertex in the same branch from each other and from vertices in other branches. Furthermore, $\ell$ also path-separates vertices in the single branch with leaf outside of $R$ from each other. Thus, $R$ separates all vertices in $\mathcal{S}.$
		
		The tightness can be exemplified by a subdivided star with $\ell$ leaves with exactly same labels on all its edges. In this case, all leaves are twins and we are forced to take all but one to separate them.
	\end{proof}
	
	\begin{theorem}
		\label{PropkperiodicBinTrees}
		Let $T$ be a complete binary tree on $2^n-1$ vertices, $\lambda$ be a $2$-periodic 1-labeling, and $\mathcal{T}=(T,\lambda)$. We have $2^{n-3}\leq M_2(\mathcal{T})\leq 2^{n-2}$. Both bounds are tight.
	\end{theorem}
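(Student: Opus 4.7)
My proof plan splits into an upper bound construction, a lower bound argument, and two explicit tightness witnesses. By \Cref{PropkperiodicLeaves} I may assume every temporal resolving set under consideration consists only of leaves. The tree $T$ has $2^{n-1}$ leaves grouped into $2^{n-2}$ sibling-leaf pairs (sharing a level-$(n-1)$ parent) and into $2^{n-3}$ four-leaf cousin groups (sharing a level-$(n-2)$ grandparent); both bounds reflect these two structures.

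For the upper bound $M_2(\mathcal{T})\leq 2^{n-2}$, I set $R$ to contain exactly one leaf per sibling-leaf pair, so $|R|=2^{n-2}$. To verify $R$ is a temporal resolving set, take two distinct vertices $u,v$: if one of them shares its sibling pair with some $r\in R$, then $\dist_t(r,u)\leq 2$ while $\dist_t(r,v)$ is strictly larger because the underlying path length from $r$ to any vertex outside $r$'s sibling pair is at least $4$; otherwise, some $r\in R$ in the subtree rooted at $u$'s (or $v$'s) level-$(n-1)$ ancestor path-separates $u$ and $v$ via the fact that one of them lies on $r$'s journey to the other. The verification reduces to a routine check using that temporal distance lies between the underlying path length and twice it.

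For the lower bound $M_2(\mathcal{T})\geq 2^{n-3}$, I claim that each cousin group contributes at least one leaf to $R$. Fix a grandparent $g$ with cousin group $\{a_1,a_2,b_1,b_2\}$ (with parents $a,b$), and suppose no leaf below $g$ lies in $R$. For each external $r\in R$, let $\pi$ be the parity of $\dist_t(r,g)$; the delay from $g$ to each cousin lies in $\{2,3,4\}$ and is completely determined by $\pi$ and the six labels $\lambda(ga),\lambda(gb),\lambda(aa_i),\lambda(bb_i)$. By pigeonhole every single $r$ leaves a pair of cousins colliding, and all $r$'s of the same parity $\pi$ induce the \emph{same} collision partition. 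A finite case-analysis on the six labels shows that either some sibling pair shares its label and is therefore a twin pair, forcing a leaf of that pair into $R$; or no sibling-pair twin exists, in which case the two collision partitions (one per parity) still jointly fail to separate at least one cousin pair without an internal leaf. Summing one forced inclusion over the $2^{n-3}$ cousin groups gives $|R|\geq 2^{n-3}$.

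Tightness of the upper bound is witnessed by the labeling where both edges in every sibling pair carry the same label: each pair becomes a twin pair, forcing at least one of each pair's leaves into any resolving set, hence $|R|\geq 2^{n-2}$. Tightness of the lower bound is witnessed by a labeling where the six edges under each grandparent are chosen so the four cousin signatures $(c^{(0)},c^{(1)})\in\{2,3,4\}^2$ are all distinct, and the higher-level labels are tuned so external $R$-leaves of both parities reach every grandparent; then a single carefully chosen leaf per cousin group suffices and attains $|R|=2^{n-3}$. The main obstacle is the twin-free subcase of the lower bound, where the simple twin argument does not apply and one must enumerate the parity-dependent collision patterns of the six-label configurations to confirm that some cousin pair always escapes separation without an internal $R$-leaf.
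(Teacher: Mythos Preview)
Your overall architecture matches the paper's: one leaf per sibling pair for the upper bound, and one leaf per cousin group (what the paper calls an \emph{essential subtree}) for the lower bound. However, there is a genuine gap in your lower-bound argument, and a secondary error in your upper-bound verification.

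\textbf{The main gap (lower bound).} You allow the external $r\in R$ to reach the grandparent $g$ with either parity of $\dist_t(r,g)$, and you assert that ``the two collision partitions (one per parity) still jointly fail to separate at least one cousin pair''. This assertion is false. In the twin-free configuration where the four paths from $g$ to its leaf descendants carry all four label sequences $(1,1),(1,2),(2,1),(2,2)$, a direct computation gives: for $d=\dist_t(r,g)$ even, only the pair $\{(1,1),(2,1)\}$ collides; for $d$ odd, only the two sibling pairs $\{(1,1),(1,2)\}$ and $\{(2,1),(2,2)\}$ collide. These collision sets are disjoint, so if both parities were realised among the external $r$'s, all six cousin pairs would be separated and your argument would collapse. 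What you are missing is the observation the paper uses implicitly: every external leaf reaches $g$ through the \emph{single} edge joining $g$ to its parent, so $\dist_t(r,g)$ has the same parity (namely that of $\lambda(\mathrm{parent}(g),g)$) for \emph{all} external $r$. Hence only one collision partition ever occurs, and that partition always contains an unseparated pair. Without this fixed-parity observation your case analysis cannot close, and your tightness construction for the lower bound---which explicitly asks for ``external $R$-leaves of both parities'' at each grandparent---is impossible for the same reason. The paper instead exhibits a concrete labeling (at every internal vertex the two downward edges receive labels $1$ and $2$) and picks the all-$1$'s leaf in each essential subtree.

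\textbf{A secondary error (upper bound).} Your claim that $\dist_t(r,u)\leq 2$ for the sibling $u$ of $r$ is wrong: if both edges on the $r$--parent--$u$ path carry label~$2$, then $\dist_t(r,u)=4$. Likewise, internal vertices outside $r$'s sibling pair can be at underlying distance $1$ or $2$ from $r$, not $\geq 4$. The paper's argument is that the only non-path-separated pairs are pairs of leaves, and for a leaf $v$ at underlying distance $\geq 4$ from $r$ one has $\dist_t(r,v)\geq 4$, with equality forcing the first edge out of $r$ to have label~$1$, which in turn forces $\dist_t(r,u)\leq 3$.
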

 
	\begin{proof}
		The \emph{center} of a tree is a vertex $v$ such that the  maximum distance between the vertex $v$ and any other vertex of the tree is minimal. (In this case, the center is a unique vertex.)  For convenience, we consider all trees in this proof as rooted in the center.  Based on the distance from the center, we say that vertices at distance $i$ from the center are \emph{on level $i$}. For a given $i$, \emph{higher} levels are levels from $0$ to $i-1$, and \emph{lower} levels are those with value at least $i+1$. A complete binary tree on $2^n-1$ vertices has $2^{n-1}$ leaves and they are on level $n-1$; the center is on level $0$. 
  We shall say that leaves are \emph{close} if they are at distance $2$ in the underlying tree. Finally, subtrees of order seven rooted in a vertex on level $n-3$ and induced by all the vertices under such a vertex are called \emph{essential}.
		
		Again, by \Cref{PropkperiodicLeaves}, we consider only sets of leaves as candidates for optimal temporal resolving sets if these are of size at least two.
		
		For the lower bound, let $\mathcal{T}=(T,\lambda)$, where $T$ is a complete binary tree on $2^n-1$ vertices and $\lambda$ is a $2$-periodic 1-labeling. We focus on essential subtrees. If we take less than $2^{n-3}$ vertices into our candidate set $R$, then necessarily at least one of the essential subtrees will have none of its leaves in $R$. Let us denote its root by $r$. In such a subtree, the four paths from $r$ to the leaves have to be labeled by all four possible combinations of labels $1$ and $2$, otherwise we easily find two vertices not separated by $R$. However, even if this is the case, none of the vertices from $R$ are able to separate the leaf with labels $1$ and $1$ on the path from $r$ (denote the leaf by $l_1$) and the leaf with labels $2$ and $1$ on the path from $r$ (denote this leaf by $l_2$). Indeed, the temporal distance of any vertex $c$ of $R$ to $l_1$ and $l_2$ is $d+3$ where $d$ is the temporal distance of $c$ to $r$. Thus at least $2^{n-3}$ leaves are needed in any temporal resolving set of $\mathcal{T}$.
		
		To show that the lower bound is tight, consider $\mathcal{T}$ with 
		a $2$-periodic 1-labeling $\lambda$, 
		constructed in a top-down fashion, proceeding level by level  we label one of the edges going to the lower level with $1$ and the other one with $2$. We shall form a temporal resolving set $R$ of size $2^{n-3}$ by taking one leaf from each essential subtree: the one connected to the root of the respective essential subtree by path labeled only with $1$. We have to prove that all vertices are now separated. Let us have two different vertices, say $u$ and $v$, outside of $R$ and suppose they are not path-separated. We distinguish the following cases.
		\begin{itemize}
			\item \emph{Vertices $u$ and $v$ are both on levels $n-3$ or higher}: In this case $u$ and $v$ are path separated and thus, this case cannot occur.
			\item \emph{Vertices $u$ and $v$ are on levels $n-2$ and/or $n-1$}: Either $u$ and $v$ are in the same essential subtree $\mathcal{T}_e$ and then, by a simple calculation, the temporal distances from the unique vertex from $R$ in $\mathcal{T}_e$ separate these two vertices. (Temporal distances in $\mathcal{T}_e$ range from $0$ to $6$ and they appear uniquely.) Assume then  that $u$ and $v$ are in different essential subtrees. Let $u\in V(\mathcal{T}_e)$ and $v\in V(\mathcal{T}_e')$. We observe that the distance from the leaf, say $l_e$ in $R\cap V(\mathcal{T}_e)$, to $v$ is at least 7. Thus, $l_e$ separates vertices $u$ and $v$.
			\item \emph{Vertex $u$ is on level at most $n-3$, while $v$ is on a level between $n-3$ and $n-1$, or vice versa}: In this case, $u$ and $v$ are path-separated unless we are in the special case of $\mathcal{T}$ having precisely $7$ vertices and being itself an essential subtree. However, we already know that separation is guaranteed in this case.
		\end{itemize}
  
		We have covered all the possible cases and hence the tightness follows.
		
		For the upper bound, we consider temporal tree $\mathcal{T}=(T,\lambda)$, where $\lambda$ is a $2$-periodic 1-labeling. We show that taking one of the leaves in each of the $2^{n-2}$ subtrees rooted in a vertex on level $n-2$ suffices to separate all vertices in $\mathcal{T}$. Let us denote such a set by $R$. Again, we have to be careful only about the pairs of vertices that are not path-separated. Suppose we have such a pair of different vertices $u,v \not\in R$ so that there is no vertex $c \in R$ such that either $u$ is on the path from $c$ to $v$, or $v$ is on the path from $u$ to $c$. Based on our choice of $R$, both $u$ and $v$ have to be leaves. We observe that distance from a vertex $r\in R$ \textit{close} to $u$ has distance in set $\{2,3,4\}$ while the distance to other leaves is at least $4$. However, if $\dist_t(r,v)=4$, then $\dist_t(r,u)\leq 3$. Thus, $R$ separates all vertices and the claim is proved.
		
		To show that the upper bound is tight, consider
		$\mathcal{T}=(T,\lambda)$ with a $2$-periodic 1-labeling $\lambda$ where all edges have the same label. 
		Take any subset $R'$ of leaves of size at most $2^{n-2} - 1$. In such a way, there has to be at least one subtree rooted at level $n-2$ out of all $2^{n-2}$ possible ones with no leaves in $R'$. The leaves of such a subtree are not separated by $R'$ and thus, the tightness is proved. 
	\end{proof}

	\section{Conclusion}\label{SecConclusion}

    We have introduced the notion of temporal resolving sets. Our hardness results show, that \temporalMD is NP-hard on temporal complete graphs with 2-labelings, on temporal subdivided stars with $2$-labelings, on temporal trees with only one vertex of degree 5 or greater with 2-labelings and edges appearing in consecutive time-steps. On the positive side, we have given a linear time algorithm for temporal paths with 1-labelings, a polynomial algorithm for temporal stars and temporal subdivided stars with $1$-labelings where each edge has label $1$ or $2$.  We have also presented multiple combinatorial and algorithmic results for temporal graphs with a $p$-periodic $1$-labeling, including the exact value of temporal resolving number paths, tight bounds for temporal cycles, temporal complete graphs, temporal subdivided stars and temporal complete binary trees. Those results also gave us FPT and XP parameters for trees wrt the number of leaves and for subdivided stars wrt the temporal period, respectively. The results are summarized in \Cref{tab-complexityResults} along with complexity results on standard and $k$-truncated resolving sets, which highlight the difficulty of \temporalMD compared to static versions of the problem.

    \begin{table}
        \centering
        \begin{tblr}{
				colspec={Q[c,5em]Q[c,7em]Q[c,7em]Q[c,20em]},
				rows={m},
				hlines,
				hline{1}={2pt},
				hline{2}={2pt}
			}
			& {Standard \\resolving set} & $k$-truncated resolving set & {\temporalMD} \\
			
			Trees & poly~\cite{chartrand2000resolvability,harary1976metric,khuller1996landmarks,slater1975leaves} & {NP-hard,\\ XP wrt $k$~\cite{gutkovich2023computing}} & {NP-hard (2-labelings with consecutive time labels): \Cref{thm-NPhardnessOnTrees}, \\ FPT wrt number of leaves ($p$-periodic 1-labeling): \Cref{PropkperiodicLeaves}} \\
			
			Subdivided stars & poly & open & {NP-hard (2-labeling): \Cref{thm-NPhardnessOnSubStars}, \\ poly (1-labeling and $\tmax=2$): \Cref{thm:polyalgSubStars1label12}, \\ XP wrt $p$ ($p$-periodic 1-labeling): \Cref{ThmSubStarPol}} \\
			
			Stars, Paths & trivial & poly~\cite{frongillo2022truncated} & poly (1-labeling): \Cref{The:TempPath1label,thm-polyStar} \\
			
			Complete graphs & trivial & trivial & {NP-hard (1-labeling and $\tmax=2$): \Cref{thm-completeGraphsLabels12}}
		\end{tblr}
        \caption{Summary of complexity results for standard, $k$-truncated and temporal resolving sets on the graph classes studied in this paper.}
        \label{tab-complexityResults}
    \end{table}

    Since periodic 1-labelings seem easier to solve than non-periodic ones, the following open problem seems the next step in the study of temporal resolving sets:
    \begin{problem}
		\label{ProbPeriodicTrees} 
        Is \temporalMD polynomial in temporal trees with periodic 1-labelings?
	\end{problem}
    Note that if there is a positive answer to Open Problem \ref{ProbPeriodicLeaves}, then generalizing Open Problem \ref{ProbPeriodicTrees} for periodic $k$-labelings becomes tempting.

    As a resolving set is equivalent with a temporal resolving set in a temporal graph $\mathcal{G}$ where every edge has time labels from $1$ to $\diam(G)$ and there are more positive algorithmic results for resolving sets (they can be solved in polynomial-time in some classes for which \temporalMD is NP-hard, such as complete graphs or trees) than temporal resolving sets, it might be possible that each edge having many time labels makes the problem easier. This problem might also be easier in general graphs with $p$-periodic labelings. Moreover, since minimum $k$-truncated resolving sets can be found in polynomial-time in trees when $k$ is fixed, a potential direction would be to study the parameterized complexity of temporal resolving sets, with the total number of available time-steps as a natural parameter (this parameter is unbounded in our NP-hardness reductions for trees). Other potential future research in this direction would be the study of \temporalMD when parameterized by temporal measures that, when bounded, restrict the changes to the graph at each time-step, or access to portions of the graph at some point in time, such as interval membership width~\cite{bumpus2023edge}.

    Another possible direction for future research is considering other kinds of temporal distances. For example, instead of considering the first time-step at which we may arrive, we could take into account the time-step at which we started the journey, that is, we could consider the total number of time-steps it takes to make the journey. Another option would be to consider the temporal distance from vertices to the temporal resolving set. This could yield new results as the temporal distance is not symmetric and would correspond to a setup where we are tracking an object. 
 	
	\bibliographystyle{plain}
	\bibliography{Temporal}
	
\end{document}